\documentclass[11pt,letterpaper,reqno]{amsart}

\usepackage{amssymb}
\usepackage{amsmath}
\usepackage{amsthm}
\usepackage{bbm}
\usepackage{enumerate}
\usepackage{doi}

\usepackage{bookmark}
\usepackage{hyperref}
\hypersetup{pdfstartview={FitH}}

\newtheorem{theorem}{Theorem}[section]
\newtheorem{corollary}[theorem]{Corollary}

\newtheorem{lemma}[theorem]{Lemma}
\newtheorem{proposition}[theorem]{Proposition}
\theoremstyle{definition}
\newtheorem{definition}[theorem]{Definition}
\newtheorem{remark}[theorem]{Remark}
\newtheorem{example}[theorem]{Example}

\numberwithin{equation}{section}
\allowdisplaybreaks

\begin{document}

\title{On transposed Poisson conformal algebras}

\author{Lamei Yuan}
\address{Lamei Yuan (corresponding author), School of Mathematics, Harbin Institute of Technology, Harbin 150001, People's Republic of China}
\email{lmyuan@hit.edu.cn}

\author{Hao Fang}
\address{Hao Fang, School of Mathematics, Harbin Institute of Technology, Harbin 150001, People's Republic of China}
\email{23s012028@stu.hit.edu.cn}


\subjclass[2020]{17B69, 17B63, 17B61, 17D25}

\keywords{(noncommutative) transposed Poisson conformal algebra, Hom-Lie conformal algebra, Poisson conformal algebra, Lie conformal algebra}

\begin{abstract}
The aim of this paper is to introduce the notion of (noncommutative) transposed Poisson conformal algebras, which serve as the conformal analogues of transposed Poisson algebras
and admit a rich class of identities.  We show that the tensor product of two transposed Poisson conformal algebras is also a transposed Poisson conformal algebra.
Moreover, we establish a close relationship between transposed Poisson conformal algebras and Hom-Lie conformal algebras, and give the compatibility conditions between a Poisson conformal algebra and a transposed Poisson conformal algebra.
In addition, we provide several constructions of transposed Poisson conformal algebras arising from related algebraic structures.
Finally, a complete classification of compatible noncommutative transposed Poisson conformal algebraic structures over the Lie conformal algebra $W(a,b)$ is given.
\end{abstract}

\maketitle



\section{Introduction}
Poisson algebras originally emerged from the study of Hamiltonian mechanics \cite{Arn78,Mac70} and Poisson geometry \cite{Vais20,Wein77} and have since appeared in various areas of mathematics and mathematical physics, including
Poisson manifolds \cite{Bha88,Vai94}, algebraic geometry \cite{Bell17,Pol97}, quantum groups \cite{Cha94,Etin98}, quantization theory \cite{Hue90,Kont03}, as well as classical and quantum mechanics \cite{Bay77,Dir67}.
A dual notion of the Poisson algebra, called the transposed Poisson algebra, was introduced by Bai et al.~\cite{Bai23} by reversing the roles of the two bilinear operations in the Leibniz rule defining the Poisson algebra.
A transposed Poisson algebra defined in this way not only parallels the classical Poisson algebras at the algebraic, operadic and categorical levels, but also relates closely to several important algebraic structures such as Novikov-Poisson algebras, pre-Lie Poisson algebras and pre-Lie commutative algebras.

The notion of a Lie conformal algebra, introduced by Kac \cite{Kac98}, encodes an axiomatic description of the singular part of the operator product expansion of chiral fields in two-dimensional conformal field theory.
Closely related to vertex algebras and formal distribution Lie algebras \cite{Chen25,Li96}, Lie conformal algebras have crucial applications in other areas of algebra and integrable systems \cite{Zelm20,Zelm03}. In particular, they provide useful tools in the study of associative algebras, infinite-dimensional Lie algebras satisfying the sole locality property \cite{Su13} and Hamiltonian formalism in the theory of nonlinear evolution equations \cite{Bara09}.
The structure theory, representation theory and cohomology theory of finite Lie conformal algebras have been extensively investigated in \cite{BKV99,CK97,DAK98,SXY20,WY17}.
Significantly, associative conformal algebras naturally emerged from the study of the representation theory of Lie conformal algebras.

The notion of a Poisson conformal algebra was introduced by Kolesnikov \cite{Kol20}, which consists of a Lie conformal algebra and a commutative associative conformal algebra satisfying certain compatibility conditions.
It was shown in \cite{Kol20,LZ23} that Poisson conformal algebras are closely connected to representations of Lie conformal algebras as well as to Poisson-Gel'fand-Dorfman algebras.
The concept of noncommutative Poisson conformal algebras was also studied in \cite{LZ23}, where their cohomology and deformation quantization were investigated.
As a consequence, it was proved that Poisson conformal algebras are the semi-classical limits of conformal formal deformations of commutative associative conformal algebras.
Furthermore, the notion of Poisson conformal bialgebras, characterized by Manin triples of Poisson conformal algebras, was introduced and investigated in \cite{HB24}.

The study of conformal analogues of classical algebraic structures has attracted increasing attention in recent years \cite{sym24,HB2021}, extending classical results to the conformal setting and promoting the development of conformal algebra theory.
As previously mentioned, Poisson conformal algebras and Poisson conformal bialgebras can be regarded as the conformal analogues of Poisson algebras and Poisson bialgebras, respectively.
Moreover, a conformal analogue of $L$-dendriform algebras was presented in \cite{HY25} with the notion of $L$-dendriform conformal algebras, which emerge naturally from the study of $\mathcal{O}$-operators on left-symmetric conformal algebras and solutions to the conformal $S$-equation.
Motivated by these results, it is natural to consider a conformal analogue of transposed Poisson algebras, namely transposed Poisson conformal algebras, which consist of a Lie conformal algebra and a commutative associative conformal algebra satisfying a transposed conformal Leibniz rule.

The rest of this paper is organized as follows. In Section~\ref{sec2}, we review some related notions and basic results about Lie conformal algebras and Poisson conformal algebras.
In Section~\ref{sec3}, we first introduce the notion of (noncommutative) transposed Poisson conformal algebra, which admits a rich class of identities.
Then we show that the tensor product of two transposed Poisson conformal algebras is also a transposed Poisson conformal algebra.
Furthermore, we display a close relationship between transposed Poisson conformal algebras and Hom-Lie conformal algebras, and give the compatibility conditions between a Poisson conformal algebra and a transposed Poisson conformal algebra.
In Section~\ref{sec4}, we provide several constructions of transposed Poisson conformal algebras arising from related algebraic structures.
In Section~\ref{sec5}, we present a complete classification of compatible noncommutative transposed Poisson conformal algebraic structures over the Lie conformal algebra $W(a,b)$.

Throughout this paper, we denote by $\mathbb{C}$ the set of complex numbers, $\mathbb{Z}_{+}$ the set of nonnegative integers. All vector spaces, linear maps, and tensor products are considered over the complex field $\mathbb{C}$.
For any vector space $A$, the space of polynomials of $\lambda$ with coefficients in $A$ is denoted by $A[\lambda]$.

\section{Preliminaries}\label{sec2}
In this section, we recall some basic notions and results concerning Lie conformal algebras and Poisson conformal algebras.
\begin{definition}[\cite{Kac99}]
A {\bf conformal algebra} $(A,\circ_\lambda)$ is a $\mathbb{C}[\partial]$-module $A$ endowed with a $\mathbb{C}$-bilinear map $A\otimes A\to A[\lambda]$, denoted by $a\otimes b\mapsto a \circ_{\lambda}b$, satisfying the following condition:
   \begin{align}
        (\partial a)\circ_{\lambda}b=-\lambda a\circ_{\lambda}b,~~~a\circ_{\lambda}(\partial b)=(\partial+\lambda)a\circ_{\lambda}b.
   \end{align}
\end{definition}
A \emph{derivation} of a conformal algebra $(A,\circ_\lambda)$ is a linear map $D: A\rightarrow A$ such that $D\partial=\partial D$ and $D(a\circ_{\lambda} b)=D(a)\circ_{\lambda} b+a\circ_{\lambda} D(b)$ for all $a,b~\in A$.
For instance, $D=\partial$ is a derivation of every conformal algebra.
\begin{definition}[\cite{Kac98}]
An {\bf associative conformal algebra} $(A,\circ_\lambda)$ is a $\mathbb{C}[\partial]$-module $A$ endowed with a $\mathbb{C}$-bilinear map $A\otimes A\to A[\lambda]$, denoted by $a\otimes b\mapsto a \circ_{\lambda}b$ such that $(A,\circ_\lambda)$ is a conformal algebra and satisfies the following identity:
   \begin{align}\label{ly}
      a\circ_{\lambda}\left(b\circ_{\mu} c\right)=\left(a\circ_{\lambda} b\right)\circ_{\lambda+\mu} c,\quad \forall ~a,b,c\in {A}.
   \end{align}
An associative conformal algebra $(A,\circ_\lambda)$ is called {\bf commutative}, if it satisfies
   \begin{align}
        a\circ_\lambda b=b\circ_{-\partial-\lambda}a,\quad \forall ~a,b\in {A}.
   \end{align}
\end{definition}

\begin{remark}[{\cite{DAK98}}]\label{rem1}
   The following properties always hold in an associative conformal algebra $(A,\circ_\lambda)$:
   \begin{align}
         a\circ_{\lambda}(b\circ_{-\partial-\mu}c) & =(a\circ_{\lambda}b)\circ_{-\partial-\mu}c, \\
         a\circ_{-\partial-\lambda}(b\circ_{\mu}c) & =(a\circ_{-\partial-\mu}b)\circ_{-\partial+\mu-\lambda}c, \\
         a\circ_{-\partial-\lambda}(b\circ_{-\partial-\mu}c) & =(a_{-\partial+\mu-\lambda}b)\circ_{-\partial-\mu}c.
   \end{align}
\end{remark}
\begin{remark}[{\cite{KN23}}] \label{rem3}
   In every commutative associative conformal algebra $(A,\circ_\lambda)$, the following identity holds:
   \begin{align}
      a\circ_\lambda\left(b\circ_\mu c\right)=b\circ_\mu\left(a\circ_\lambda c\right),\quad \forall ~a,b,c\in {A}.
   \end{align}
\end{remark}

\begin{definition}[{\cite{BKV99}}]
A {\bf Lie conformal algebra} $(A,[\cdot_{\lambda}\cdot])$ is a $\mathbb{C}[\partial]$-module $A$ endowed with a $\mathbb{C}$-bilinear map $A\otimes A\to A[\lambda]$, denoted by $a\otimes b\mapsto [a_{\lambda}b]$, satisfying the following conditions:
   \begin{align}
        &[(\partial a)_{\lambda}b]=-\lambda[a_{\lambda}b],~[a_{\lambda}(\partial b)]=(\partial+\lambda)[a_{\lambda}b], &&\text{(Conformal sesquilinearity)}\\
        &[a_{\lambda}b]=-[b_{-\lambda-\partial}a], &&\text{(Skew-symmetry)}\\
        &[a_{\lambda}[b_{\mu}c]]=[[a_{\lambda}b]_{\lambda+\mu}c]+[b_{\mu}[a_{\lambda}c]], \quad \forall ~a,b,c\in {A}.&&\text{(Jacobi identity)} \label{Jacobi}
   \end{align}
A Lie conformal algebra is called {\bf finite} if it is finitely generated as a $\mathbb{C}[\partial]$-module.
\end{definition}

\begin{example}[{\cite{DAK98}}]
The {\bf Virasoro Lie conformal algebra} $Vir$ is the simplest nontrivial Lie conformal algebra. It is defined by
   \begin{align}
        Vir=\mathbb{C}[\partial]L,\quad[L_ \lambda L]=(\partial+2\lambda)L.
   \end{align}
\end{example}
\begin{example}[{\cite{LHW20}}]
   The {\bf Lie conformal algebra} $W(a,b)$ with two parameters $a,b\in \mathbb{C}$ is a free $\mathbb{C}[\partial]$-module generated by $L$ and $M$ satisfying
   \begin{align*}
        [L_ \lambda L]=(\partial+2\lambda)L,\quad[L_ \lambda M]=(\partial+a\lambda+b)M,\quad[M_ \lambda M]=0.
   \end{align*}
\end{example}

\begin{definition}[{\cite{Yuan14}}]
A {\bf Hom-Lie conformal algebra} $(A,[\cdot_{\lambda}\cdot],\alpha)$ is a $\mathbb{C}[\partial]$-module $A$ endowed with a linear endomorphism $\alpha$ such that $\alpha\partial=\partial\alpha$, and a $\lambda$-bracket $[\cdot_{\lambda}\cdot]$ which is a $\mathbb{C}$-bilinear map $A \otimes A \rightarrow A [\lambda]$ such that the following axioms hold for all $a,b,c \in A$:
   \begin{align}
      &{\left[(\partial a)_\lambda b\right]=-\lambda\left[a_\lambda b\right],\left[a_\lambda (\partial b)\right] }=(\partial+\lambda)\left[a_\lambda b\right], &&\text{(Conformal sesquilinearity)} \\
      &{\left[a_\lambda b\right] } =-\left[b_{-\lambda-\partial} a\right], &&\text{(Skew-symmetry)}\\
      &{\left[\alpha(a)_\lambda\left[b_\mu c\right]\right] } =[\left[a_\lambda b\right]_{\lambda+\mu} \alpha(c)]+\left[\alpha(b)_\mu\left[a_\lambda c\right]\right], &&\text{(Hom-Jacobi identity)}
   \end{align}
\end{definition}

\begin{definition}[\cite{Kac98}]
A {\bf left-symmetric conformal algebra} (also known as a {\bf pre-Lie conformal algebra}) $(A,\ast_\lambda)$ is a $\mathbb{C}[\partial]$-module $A$ endowed with a $\mathbb{C}$-bilinear map $A\otimes A\to A[\lambda]$, denoted by $a\otimes b\mapsto a \ast_{\lambda}b$ such that $(A,\ast_\lambda)$ is a conformal algebra and satisfies 
   \begin{align}
        (a\ast_\lambda b)\ast_{\lambda+\mu}c-a\ast_\lambda(b\ast_\mu c)=(b\ast_\mu a)\ast_{\lambda+\mu}c-b\ast_\mu(a\ast_\lambda c),\quad \forall ~a,b,c\in {A}.
   \end{align}
A {\bf Novikov conformal algebra} $(A,\ast_\lambda)$ is a left-symmetric conformal algebra satisfying
   \begin{align}
      (a\ast_\lambda b)\ast_{\lambda+\mu}c=(a\ast_\lambda c)\ast_{-\mu-\partial}b,\quad \forall ~a,b,c\in {A}.
   \end{align}
\end{definition}

\begin{proposition}[{\cite{HL15}}] \label{propo1}
   Associative conformal algebras are left-symmetric conformal algebras. If $(A,\ast_\lambda)$ is a left-symmetric conformal algebra, then the $\lambda$-bracket
   \begin{align*}
      [a_\lambda b]=a\ast_\lambda b-b\ast_{-\lambda-\partial}a,\quad \forall~ a,b \in {A}
   \end{align*}
   defines a Lie conformal algebra $\mathfrak{g}(A)$, which is called a \textbf{sub-adjacent Lie conformal algebra} of $A$.
\end{proposition}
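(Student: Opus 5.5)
We need two facts: associativity implies left-symmetry, and the commutator bracket of a left-symmetric conformal algebra is a Lie conformal bracket. Throughout we use only conformal sesquilinearity, the defining identity of the algebra, and the standard convention that in an expression such as $b\ast_{-\lambda-\partial}a$ the symbol $\partial$ acts on the whole product.

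\emph{Associative implies left-symmetric.} The left-symmetric identity says that the associator
\[
(a,b,c)_{\lambda,\mu}:=(a\ast_\lambda b)\ast_{\lambda+\mu}c-a\ast_\lambda(b\ast_\mu c)
\]
is symmetric under $(a,\lambda)\leftrightarrow(b,\mu)$. In an associative conformal algebra, (\ref{ly}) gives $(a,b,c)_{\lambda,\mu}=0$ for all $a,b,c$, so both sides of the identity vanish.

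\emph{Sesquilinearity of the commutator.} Set $[a_\lambda b]=a\ast_\lambda b-b\ast_{-\lambda-\partial}a$. For the first term, $(\partial a)\ast_\lambda b=-\lambda\, a\ast_\lambda b$ directly. For the second term, $b\ast_\mu(\partial a)=(\partial+\mu)\,b\ast_\mu a$; substituting $\mu=-\lambda-\partial$ gives $(\partial-\lambda-\partial)\,b\ast_{-\lambda-\partial}a=-\lambda\, b\ast_{-\lambda-\partial}a$. So $[(\partial a)_\lambda b]=-\lambda[a_\lambda b]$, and the rule for $\partial b$ follows in the same way.

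\emph{Skew-symmetry.} Computing $[b_{-\lambda-\partial}a]$ means substituting $\lambda\mapsto-\lambda-\partial$ in $b\ast_\lambda a-a\ast_{-\lambda-\partial}b$. The second term becomes $a\ast_{\lambda}b$, since $-(-\lambda-\partial)-\partial=\lambda$. Hence $[b_{-\lambda-\partial}a]=b\ast_{-\lambda-\partial}a-a\ast_\lambda b=-[a_\lambda b]$.

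\emph{Jacobi identity.} This is the main bookkeeping step. Expand
\[
[a_\lambda[b_\mu c]]-[b_\mu[a_\lambda c]]-[[a_\lambda b]_{\lambda+\mu}c]
\]
into twelve terms of the forms $x\ast(y\ast z)$ and $(x\ast y)\ast z$, with $\lambda$-parameters involving $\partial$. We normalize every term so that, after a permutation relabelling, it is written as an associator. To do this we use sesquilinearity: the identity $x\ast_{-\nu-\partial}(\ldots)$ lets us move $\partial$ through products, as in Remark~\ref{rem1}, but derived only from sesquilinearity.

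The twelve terms should group into three combinations of the form $(x,y,z)-(y,x,z)$, with $z$ running over $c$, $a$, $b$ respectively. For example:
\begin{itemize}
\item $(a,b,c)_{\lambda,\mu}-(b,a,c)_{\mu,\lambda}$;
\item the terms with $a$ in the last slot, whose parameters are $\mu$ and $-\lambda-\mu-\partial$;
\item the terms with $b$ in the last slot, whose parameters are $\lambda$ and $-\lambda-\mu-\partial$.
\end{itemize}
Each combination vanishes by left-symmetry, after checking that the parameters match once $\partial$ is placed correctly.

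The hardest point is the substitution: applying the left-symmetric identity with a parameter such as $-\lambda-\mu-\partial$, where $\partial$ acts on the outer product. This is legitimate because the identity is a polynomial identity in $\lambda,\mu$ with coefficients in $A$. We therefore first replace $\mu$ by a formal variable $\nu$, apply the identity, and then set $\nu=-\lambda-\mu-\partial$ via sesquilinearity. Alternatively, one can invoke the standard proof in \cite{HL15}, which this statement cites.
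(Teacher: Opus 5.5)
The paper gives no proof of this proposition: it is quoted from \cite{HL15} and used as a black box. So there is no competing argument to compare against. Your direct verification is the standard one, and it is correct.

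\emph{First two parts.} Associativity kills both associators. Your sesquilinearity and skew-symmetry computations are right. The substitution $\mu\mapsto-\lambda-\partial$ is induced by the ring map $\mathbb{C}[\partial,\mu]\to\mathbb{C}[\partial,\lambda]$ tensored over $\mathbb{C}[\partial]$ with $A$. Hence it turns multiplication by $\partial+\mu$ into multiplication by $-\lambda$, which is exactly what your computations use.

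\emph{Jacobi identity.} The grouping you predict is the right one. Put $\nu=-\lambda-\mu-\partial$, with $\partial$ acting on the final result. Use sesquilinearity to push every inner $\partial$ outward, for example
\[
b\ast_\mu(c\ast_{-\lambda-\partial}a)=b\ast_\mu(c\ast_{\nu}a),\qquad (c\ast_{-\mu-\partial}b)\ast_{-\lambda-\partial}a=(c\ast_{\nu}b)\ast_{\nu+\mu}a,
\]
where on each left-hand side the inner $\partial$ acts only on the inner product. With your associator notation, the twelve terms of $[a_\lambda[b_\mu c]]-[b_\mu[a_\lambda c]]-[[a_\lambda b]_{\lambda+\mu}c]$ are then exactly
\[
-(a,b,c)_{\lambda,\mu}+(b,a,c)_{\mu,\lambda}-(b,c,a)_{\mu,\nu}+(c,b,a)_{\nu,\mu}+(a,c,b)_{\lambda,\nu}-(c,a,b)_{\nu,\lambda}.
\]
Each pair cancels by left-symmetry, applied in $A[\lambda,\mu,\nu]$ before substituting for $\nu$, as you propose.

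Your write-up only asserts that the terms ``should'' group this way and leaves the parameter matching unchecked. Recording this explicit identity is what turns the sketch into a complete proof.
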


\begin{definition}[{\cite{LZ23}}]
A \textbf{Poisson conformal algebra} (PCA) is a $\mathbb{C}[\partial]$-module $A$ endowed with two $\mathbb{C}$-bilinear maps $A \otimes A \rightarrow A[\lambda]$, denoted by $a\otimes b\mapsto a \circ_{\lambda}b$ and $a\otimes b\mapsto [a_{\lambda}b]$, respectively, such that $(A,\circ_\lambda)$ is a {commutative associative conformal algebra}, $(A,[{\cdot_ \lambda \cdot}])$ is a {Lie conformal algebra} and the following \emph{conformal Leibniz rule} holds:
   \begin{align}\label{rule1}
      [a_{\lambda}(b\circ_{\mu}c)]=[a_{\lambda}b]\circ_{\lambda+\mu}c+b\circ_{\mu}[a_{\lambda}c],\quad\forall~ a,b,c\in A.		
   \end{align}
If the associative $\lambda$-multiplication in the Poisson conformal algebra $(A,\circ_{\lambda},[{\cdot_ \lambda \cdot}])$ is noncommutative, we call it a {\textbf{noncommutative Poisson conformal algebra}}.
\end{definition}

\begin{remark}[{\cite{Kol20}}]
Relation \eqref{rule1} is equivalent to
   \begin{equation}\label{rule6}
      [(a \circ_\lambda b)_\mu c] = b \circ_{\mu - \lambda} [a _\lambda c] + a \circ_\lambda [b _{\mu - \lambda} c],\quad\forall~ a,b,c\in A.
   \end{equation}
\end{remark}

\begin{definition}[{\cite{Bai23}}]
Let $L$ be a vector space equipped with two bilinear operations
$$\cdot,\,[~,~ ] : L \otimes  L \rightarrow L.$$
The triple $ (L, \cdot, [~ , ~])$ is called a \textbf{transposed Poisson algebra}, if $ (L, \cdot)$ is a commutative associative algebra and $(L, [~,~])$ is a Lie algebra that satisfy the following compatibility condition
\begin{align}\label{tpa}
	2z \cdot [x, y] = [z \cdot x, y] + [x, z \cdot y], \quad\forall ~ x, y, z \in L.
\end{align}
\end{definition}
Eq.~\eqref{tpa} is called the transposed Leibniz rule because the roles played by the two binary operations in the Leibniz rule in a Poisson algebra are switched. Further, the resulting operation is rescaled by introducing a factor 2 on the left hand side.

\section{Transposed Poisson conformal algebras}\label{sec3}

In this section, we introduce the notion of a transposed Poisson conformal algebra, which is a conformal analogue of the transposed Poisson algebra. Then we present some identities and properties of transposed Poisson conformal algebras.

\subsection{Definition of transposed Poisson conformal algebras}

\begin{definition}
   A \textbf{transposed Poisson conformal algebra} (TPCA)  is a $\mathbb{C}[\partial]$-module $\mathcal{L}$ endowed with two $\mathbb{C}$-bilinear maps
   $$\circ_{\lambda},\, [{\cdot_ \lambda \cdot}]: \mathcal{L}  \otimes \mathcal{L}  \rightarrow \mathcal{L} [\lambda],$$ such that $(\mathcal{L} ,\circ_\lambda)$ is a {commutative associative conformal algebra}, $(\mathcal{L} ,[{\cdot_ \lambda \cdot}])$ is a {Lie conformal algebra} and the following \emph{transposed conformal Leibniz rule} holds:
   \begin{align}\label{rule3}
      2 (a\circ_{\lambda}[b_{\mu}c])=[(a\circ_{\lambda}b)_{\lambda+\mu}c]+[b_{\mu}(a\circ_{\lambda}c)],\quad\forall~ a,b,c\in \mathcal{L}.	
   \end{align}
   If the associative $\lambda$-multiplication in the transposed Poisson conformal algebra $(\mathcal{L},\circ_{\lambda},[{\cdot_ \lambda \cdot}])$ is noncommutative, we call it a \textbf{noncommutative TPCA}.  
\end{definition}

\begin{remark}
	The relation \eqref{rule3} can be equivalently written as
	\begin{align}\label{rule5}
		2([a_{\lambda}b]\circ_{\mu}c)=[a_{{\lambda}}(b\circ_{\mu-\lambda}c)]-[b_{\mu-\lambda}(a\circ_{\lambda}c)],\quad\forall~ a,b,c\in \mathcal{L}.
	\end{align}
\end{remark}

Let $(\mathcal{L},\circ_{\lambda},[{\cdot_ \lambda \cdot}])$ be a (noncommutative) transposed Poisson conformal algebra.
Define the $n$-th products on the (commutative) associative conformal algebra $(\mathcal{L},\circ_{\lambda})$ and the Lie conformal algebra $(\mathcal{L},[{\cdot_ \lambda \cdot}])$ for $n\in \mathbb{Z}_{+}$, respectively, by
   \begin{equation*}
      a\circ_\lambda b=\sum_{n\in \mathbb{Z}_{+}}\lambda^{(n)}(a_{(n)}b),\quad[a_\lambda b]=\sum_{n\in \mathbb{Z}_{+}}\lambda^{(n)}(a_{[n]}b),
   \end{equation*}
where $\lambda^{(n)}=\lambda^{n}/{n!}$ and $a,b\in \mathcal{L}$. Then, the notions of (commutative) associative conformal algebra and Lie conformal algebra can be equivalently expressed in terms of their respective $n$-th products.
Moreover, the transposed conformal Leibniz rule of (noncommutative) transposed Poisson conformal algebras can be rewritten in terms of $n$-th products of associative conformal algebras and Lie conformal algebras as follows:
   \begin{equation}\label{rule4}
      2(a_{(n)}(b_{[m]}c))=\sum_{j=0}^n\binom{n}{j}(a_{(j)}b)_{[n+m-j]}c+b_{{[m]}}(a_{(n)}c),\quad\forall~ a,b,c\in \mathcal{L}.
   \end{equation}

\begin{example}
  Let $(A,\circ,[\cdot,\cdot])$ be an ordinary transposed Poisson algebra. Then $\mathcal{L}=\mathbb{C}[\partial]\otimes A$ equipped with
  $$a \circ_{\lambda} b=a \circ b,\quad[{a_ \lambda b}]=[a,b],\quad\forall~ a,b\in A$$
  is a transposed Poisson conformal algebra.
\end{example}

\begin{example}
Let $(\mathcal{L}_1 ,\circ_{1\lambda},[\cdot_{\lambda}\cdot]_1,\partial_1)$ and $(\mathcal{L}_2 ,\circ_{2\lambda},[\cdot_{\lambda}\cdot]_2,\partial_2)$ be two TPCAs. For all $x_1,y_1 \in \mathcal{L}_1$, $x_2,y_2 \in \mathcal{L}_2,$ define
\begin{align*}
(x_1+x_2)\circ_{\lambda}(y_1+y_2)&=x_1\circ_{1\lambda}y_1+x_2\circ_{2\lambda}y_2, \\
[(x_1+x_2)_{\lambda}(y_1+y_2)]&=[{x_1}_{\lambda}y_1]_1+[{x_2}_{\lambda}y_2]_2.
\end{align*}
Then $(\mathcal{L}_1 \oplus \mathcal{L}_2, \circ_{\lambda}, [\cdot_{\lambda}\cdot], \partial=\partial_1\oplus\partial_2)$ is a TPCA.
\end{example}

\subsection{Identities in transposed Poisson conformal algebras}
There is a rich class of identities for transposed Poisson conformal algebras. 
\begin{theorem}
   Let $(\mathcal{L} ,\circ_\lambda,[{\cdot_ \lambda \cdot}])$ be a transposed Poisson conformal algebra. Then the following identities hold:
   \begin{align}
      &x\circ_{\lambda}[y_{\mu}z]+y\circ_{\mu}[z_{-\partial-\lambda}x]+z\circ_{-\partial-\lambda-\mu}[x_{\lambda}y]=0, \label{eqh}\\
      &[x_{\lambda}y]\circ_{\mu}z+[y_{\mu-\lambda}z]\circ_{-\partial-\lambda}x+[z_{-\partial-\lambda}x]\circ_{-\partial-\mu+\lambda}y=0, \label{eqw}\\
      &[(h\circ_{\lambda}[x_{\gamma}y])_{\lambda+\mu}z]+[(h\circ_{\lambda}[y_{\mu-\gamma}z])_{-\partial-\gamma}x]+[(h\circ_{\lambda}[z_{-\partial-\gamma}x])_{-\partial-\mu+\gamma}y]=0, \label{eqs}\\
      &[[x_{\gamma}y]_{\mu}(h\circ_{\lambda}z)]+[[y_{\mu-\gamma}z]_{-\partial-\lambda-\gamma}(h\circ_{\lambda}x)]+[[z_{-\partial-\gamma}x]_{-\partial-\lambda-\mu+\gamma}(h\circ_{\lambda}y)]=0, \label{equ}\\
      &[h_{\lambda}x]\circ_{\lambda+\gamma}[y_{\mu-\gamma}z]+[h_{\lambda}y]\circ_{\lambda+\mu-\gamma}[z_{-\partial-\gamma}x]+[h_{\lambda}z]\circ_{-\partial-\mu}[x_{\gamma}y]=0,  \label{eql}\\
      &[(u\circ_{\lambda}x)_{\mu}(v\circ_{\gamma}y)]+[(v\circ_{\gamma}x)_{\gamma+\mu-\lambda}(u\circ_{\lambda}y)]=2\left(u\circ_{\lambda}v\right)\circ_{\lambda+\gamma}[x_{\mu-\lambda}y], \label{ss1}\\
      &x\circ_{\mu-\lambda}[u_{\lambda}(v\circ_{\gamma}y)]+[(v\circ_{\gamma}x)_{\gamma+\mu-\lambda}u]\circ_{\gamma+\mu}y
      =\left(u\circ_{\lambda}v\right)\circ_{\lambda+\gamma}[x_{\mu-\lambda}y],\label{ss2}
   \end{align}
   for all $x,y,z,h,u,v \in \mathcal{L}$.
\end{theorem}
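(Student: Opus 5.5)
The plan is to follow the strategy of Bai et al.\ for ordinary transposed Poisson algebras, transplanted to the $\lambda$-bracket setting. The basic tools are:
\begin{itemize}
\item the transposed conformal Leibniz rule \eqref{rule3} and its variant \eqref{rule5};
\item skew-symmetry of the $\lambda$-bracket;
\item commutativity $a\circ_\lambda b=b\circ_{-\partial-\lambda}a$;
\item the identities of Remarks~\ref{rem1} and \ref{rem3}.
\end{itemize}
The main bookkeeping device is the substitution $\lambda\mapsto-\partial-\lambda$. It must always be applied with the convention that $\partial$ acts on the whole output, so that sesquilinearity $[(\partial a)_\lambda b]=-\lambda[a_\lambda b]$ lets such shifts pass through brackets.

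\textbf{Identity \eqref{eqh}.} Write \eqref{rule3} three times, for the cyclic triples $(x;y,z)$, $(y;z,x)$ and $(z;x,y)$, with parameters $\lambda,\mu$ and $-\partial-\lambda-\mu$ chosen so that the outer spectral parameters match. Summing gives
\[
2\bigl(x\circ_\lambda[y_\mu z]+\cdots\bigr)
\]
on the left. The right side consists of six terms of the form $[(a\circ b)_\cdot c]$ and $[b_\cdot(a\circ c)]$. Using commutativity and skew-symmetry, these should cancel in pairs; for example, $[(x\circ_\lambda y)_{\lambda+\mu}z]$ cancels against $[z_{-\partial-\lambda-\mu}(x\circ_\lambda y)]$ from the third instance. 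Hence the cyclic sum vanishes.

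\textbf{Identity \eqref{eqw}.} This should be the same identity rewritten via commutativity, with $[x_\lambda y]\circ_\mu z=z\circ_{-\partial-\mu}[x_\lambda y]$ and so on. Alternatively, one can repeat the argument with \eqref{rule5}.

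\textbf{Identities \eqref{eqs} and \eqref{equ}.} These follow by applying \eqref{rule3}, in the form
\[
[(h\circ_\lambda w)_{\lambda+\nu}c]=2\,h\circ_\lambda[w_\nu c]-[w_\nu(h\circ_\lambda c)]
\]
with $w$ a bracket, to each cyclic term. The sum of the terms $h\circ_\lambda[[x_\gamma y]_\cdot z]$ vanishes by the Jacobi identity, rewritten cyclically using skew-symmetry. The remaining terms are of the form $[[x_\gamma y]_\cdot(h\circ z)]$. So \eqref{eqs} and \eqref{equ} are equivalent to each other up to a sign and a factor, and a second relation is needed to kill them.

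That relation comes from expanding the Jacobi identity with $h\circ_\lambda z$ in one slot and using \eqref{rule3} to pull out $h$. Equivalently, apply \eqref{rule5} to $[[x_\gamma y]_\mu(h\circ_\lambda z)]$ and then use \eqref{eqh} with $h$ factored. Combining the two linear relations should force both cyclic sums to zero.

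\textbf{Identity \eqref{eql}.} Apply \eqref{rule3} with $a=h$ in bracket form, i.e.\ via \eqref{rule5} for $[h_\lambda x]\circ[y\,z]$, to each cyclic term. The result is a combination of $[h_\lambda(x\circ[y z])]$-type terms, whose cyclic sum vanishes by \eqref{eqh}, and of $[x_\cdot(h\circ\cdots)]$-type terms controlled by \eqref{equ}.

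\textbf{Identities \eqref{ss1} and \eqref{ss2}.} For \eqref{ss1}, expand $[(u\circ_\lambda x)_\mu(v\circ_\gamma y)]$ twice with \eqref{rule3}, once in each argument. Do the same for the swapped term, then symmetrize using Remark~\ref{rem3} to move $u$ and $v$ past each other, and use associativity to collect $(u\circ_\lambda v)\circ_{\lambda+\gamma}[x_{\mu-\lambda}y]$. The cross terms should cancel after applying skew-symmetry. Identity \eqref{ss2} then follows by substituting one application of \eqref{rule3} into \eqref{ss1}.

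\textbf{Main obstacle.} I expect the hardest step to be \eqref{eqs}/\eqref{equ}. In the classical case, Bai et al.\ needed \eqref{eqh} together with a degree argument involving the factor $2$. Here, in addition, the spectral parameters must be tracked precisely: the shifts $\mu-\gamma$ and $-\partial-\mu+\gamma$ have to come out consistently when $\partial$ passes through $h\circ_\lambda$. My first attempt will be to verify the computation at the level of $n$-th products via \eqref{rule4} if the $\lambda$-bookkeeping becomes unmanageable.
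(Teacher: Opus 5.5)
Your plan for \eqref{eqh}--\eqref{eql} is essentially the paper's. For \eqref{eqh} you use three instances of \eqref{rule3}/\eqref{rule5} cancelling in pairs, and for \eqref{eqw} you use commutativity. For \eqref{eqs}/\eqref{equ} you use two independent relations, where $S$ and $T$ denote the left sides of \eqref{eqs} and \eqref{equ}. The first is $S+T=0$ (Jacobi inside $h\circ_\lambda[\cdot]$, the paper's \eqref{Id1}). The second is $2S+T=0$ (Jacobi on $[[x_\gamma y]_\mu(h\circ_\lambda z)]$, then \eqref{rule3} on $[y_{\mu-\gamma}(h\circ_\lambda z)]$, the paper's \eqref{Id2}).

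Two corrections there. First, your ``equivalent'' second route, applying \eqref{rule5} to $[[x_\gamma y]_\mu(h\circ_\lambda z)]$, is not equivalent. It leaves the cyclic sum of $[[x_\gamma y]_\mu h]\circ_{\lambda+\mu}z$. Via \eqref{eqw} and Jacobi this is a multiple of the left side of \eqref{eql}, and \eqref{eql} is itself derived from \eqref{eqs}, so that route is circular. Second, in \eqref{eql} the leftover terms $-[x_\gamma(h\circ_\lambda[y_{\mu-\gamma}z])]=[(h\circ_\lambda[y_{\mu-\gamma}z])_{-\partial-\gamma}x]$, etc., are the terms of \eqref{eqs}, not of \eqref{equ}.

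The genuine gap is in \eqref{ss1}. Carrying out your double expansion of the two left-hand terms (in either order) yields
\begin{align*}
&[(u\circ_{\lambda}x)_{\mu}(v\circ_{\gamma}y)]+[(v\circ_{\gamma}x)_{\gamma+\mu-\lambda}(u\circ_{\lambda}y)]\\
&\qquad=4\,(u\circ_{\lambda}v)\circ_{\lambda+\gamma}[x_{\mu-\lambda}y]-[(u\circ_{\lambda}(v\circ_{\gamma}x))_{\gamma+\mu}y]-[x_{\mu-\lambda}(u\circ_{\lambda}(v\circ_{\gamma}y))].
\end{align*}
The two trailing terms do not cancel by skew-symmetry. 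Set $w=u\circ_\lambda v$; by associativity they read $[(w\circ_{\lambda+\gamma}x)_{\gamma+\mu}y]+[x_{\mu-\lambda}(w\circ_{\lambda+\gamma}y)]$. Applying \eqref{rule3} to the element $w$ turns this into $2\,w\circ_{\lambda+\gamma}[x_{\mu-\lambda}y]$.

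This third use of the transposed Leibniz rule, with the product $u\circ_\lambda v$ itself in the first slot (the paper's \eqref{qq2}), is the missing idea. It is what lowers the coefficient from $4$ to $2$. Discarding those terms as ``cancelling'' would produce a wrong identity with coefficient $4$. Remark~\ref{rem3} plays no role.

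Likewise, \eqref{ss2} needs two substitutions into \eqref{ss1}, not one. Apply \eqref{rule3} to the first term after rewriting $u\circ_\lambda x$ as $x\circ_{\mu-\lambda}u$ inside the bracket, and apply \eqref{rule5} to the second term. The two resulting $[u_\lambda(\cdots)]$ terms cancel because $x\circ_{\mu-\lambda}(v\circ_\gamma y)=(v\circ_\gamma x)\circ_{\gamma+\mu-\lambda}y$.
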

\begin{proof}
Let $x,y,z,h,u,v \in \mathcal{L}$. In fact, Eq.~\eqref{eqw} is equivalent to Eq.~\eqref{eqh}. By Eqs.~\eqref{rule3} and \eqref{rule5}, we have
   \begin{align*}
      &{2} (x\circ_{\lambda}[y_{\mu}z])=[(x\circ_{\lambda}y)_{\lambda+\mu}z]+[y_{\mu}(x\circ_{\lambda}z)],	\\
      &{2} (y\circ_{\mu}[z_{-\partial-\lambda}x])=-{2} (y\circ_{\mu}[x_{\lambda}z])=-[(y\circ_{\mu}x)_{\lambda+\mu}z]-[x_{\lambda}(y\circ_{\mu}z)],\\
      &{2} (z\circ_{-\partial-\lambda-\mu}[x_{\lambda}y])={2} ([x_{\lambda}y]\circ_{\lambda+\mu}z)=[x_{{\lambda}}(y\circ_{\mu}z)]-[y_{\mu}(x\circ_{\lambda}z)].
   \end{align*}
Note that $[(x\circ_{\lambda}y)_{\lambda+\mu}z]=[(y\circ_{-\partial-\lambda}x)_{\lambda+\mu}z]=[(y\circ_{\mu}x)_{\lambda+\mu}z]$. Taking the sum of the three identities yields Eq.~\eqref{eqh}.

The proof of Eq.~\eqref{eqs} is more involved. First by Eq.~\eqref{rule3}, we have
   \begin{align*}
      2(h\circ_{\lambda}[[x_{\gamma}y]_{\mu}z])&=[(h\circ_{\lambda}[x_{\gamma}y])_{\lambda+\mu}z]+[[x_{\gamma}y]_{\mu}(h\circ_{\lambda}z)],\\
      -2(h\circ_{\lambda}[x_{\gamma}[y_{\mu-\gamma}z]])&=[(h\circ_{\lambda}[y_{\mu-\gamma}z])_{-\partial-\gamma}x]+[[y_{\mu-\gamma}z]_{-\partial-\lambda-\gamma}(h\circ_{\lambda}x)],\\
      2(h\circ_{\lambda}[y_{\mu-\gamma}[x_{\gamma}z]])&=[(h\circ_{\lambda}[z_{-\partial-\gamma}x])_{-\partial-\mu+\gamma}y]+[[z_{-\partial-\gamma}x]_{-\partial-\lambda-\mu+\gamma}(h\circ_{\lambda}y)].
   \end{align*}
Upon summing the three identities above and applying the Jacobi identity~\eqref{Jacobi}, we obtain
   \begin{eqnarray}\label{Id1}
      \begin{aligned}
      &[(h\circ_{\lambda}[x_{\gamma}y])_{\lambda+\mu}z]+[(h\circ_{\lambda}[y_{\mu-\gamma}z])_{-\partial-\gamma}x]+[(h\circ_{\lambda}[z_{-\partial-\gamma}x])_{-\partial-\mu+\gamma}y]+ \\
      &[[x_{\gamma}y]_{\mu}(h\circ_{\lambda}z)]+[[y_{\mu-\gamma}z]_{-\partial-\lambda-\gamma}(h\circ_{\lambda}x)]+[[z_{-\partial-\gamma}x]_{-\partial-\lambda-\mu+\gamma}(h\circ_{\lambda}y)]=0.
      \end{aligned}
   \end{eqnarray}
Then, applying the Jacobi identity to $[[x_{\gamma}y]_{\mu}(h\circ_{\lambda}z)]$ yields
   \begin{align*}
      [[x_{\gamma}y]_{\mu}(h\circ_{\lambda}z)]+[y_{\mu-\gamma}[x_{\gamma}(h\circ_{\lambda}z)]]=[x_{\gamma}[y_{\mu-\gamma}(h\circ_{\lambda}z)]]
   \end{align*}
and applying Eq.~\eqref{rule3} to $[y_{\mu-\gamma}(h\circ_{\lambda}z)]$ yields
   \begin{align*}
      [y_{\mu-\gamma}(h\circ_{\lambda}z)]=2\left(h\circ_{\lambda}[y_{\mu-\gamma}z]\right)-[(h\circ_{\lambda}y)_{\lambda+\mu-\gamma}z].
   \end{align*}
Hence, we arrive at
   \begin{align*}
      [[x_{\gamma}y]_{\mu}(h\circ_{\lambda}z)]+2[(h\circ_{\lambda}[y_{\mu-\gamma}z])_{-\partial-\gamma}x]=-[x_{\gamma}[(h\circ_{\lambda}y)_{\lambda+\mu-\gamma}z]]-[y_{\mu-\gamma}[x_{\gamma}(h\circ_{\lambda}z)]].
   \end{align*}
In a similar way, we have
   \begin{align*}
      [[y_{\mu-\gamma}z]_{-\partial-\lambda-\gamma}(h\circ_{\lambda}x)]+2[(h\circ_{\lambda}&[z_{-\partial-\gamma}x])_{-\partial-\mu+\gamma}y] \nonumber\\
      &=[y_{\mu-\gamma}[x_{\gamma}(h\circ_{\lambda}z)]]-[[(h\circ_{\lambda}x)_{\lambda+\gamma}y]_{\lambda+\mu}z],\\
      [[z_{-\partial-\gamma}x]_{-\partial-\lambda-\mu+\gamma}(h\circ_{\lambda}y)]+2[(h\circ_{\lambda}&[x_{\gamma}y])_{\lambda+\mu}z] \nonumber\\
      &=[x_{\gamma}[(h\circ_{\lambda}y)_{\lambda+\mu-\gamma}z]]+[[(h\circ_{\lambda}x)_{\lambda+\gamma}y]_{\lambda+\mu}z].
   \end{align*}
Upon summing the three identities above, we obtain
   \begin{eqnarray}
      \begin{aligned}\label{Id2}
      &2\left([(h\circ_{\lambda}[x_{\gamma}y])_{\lambda+\mu}z]+[(h\circ_{\lambda}[y_{\mu-\gamma}z])_{-\partial-\gamma}x]+[(h\circ_{\lambda}[z_{-\partial-\gamma}x])_{-\partial-\mu+\gamma}y]\right)+ \\
      &[[x_{\gamma}y]_{\mu}(h\circ_{\lambda}z)]+[[y_{\mu-\gamma}z]_{-\partial-\lambda-\gamma}(h\circ_{\lambda}x)]+[[z_{-\partial-\gamma}x]_{-\partial-\lambda-\mu+\gamma}(h\circ_{\lambda}y)]=0.
      \end{aligned}
   \end{eqnarray}
Taking the difference between Eq.~\eqref{Id1} and Eq.~\eqref{Id2} yields Eq.~\eqref{eqs}.
Moreover, Eq.~\eqref{equ} follows by substituting Eq.~\eqref{eqs} into Eq.~\eqref{Id1}.

By Eq.~\eqref{rule3}, we obtain
   \begin{align}\label{yqq2}
      2\left([x_{\gamma}y]\circ_{\mu}[h_{\lambda}z]\right)=[([x_{\gamma}y]\circ_{\mu}h)_{\lambda+\mu}z]+[h_{\lambda}([x_{\gamma}y]\circ_{\mu}z)].
   \end{align}
Observe that
   \begin{align*}
      [(h\circ_{\lambda}[x_{\gamma}y])_{\lambda+\mu}z]=[([x_{\gamma}y]\circ_{-\partial-\lambda}h)_{\lambda+\mu}z]
      =[([x_{\gamma}y]\circ_{\mu}h)_{\lambda+\mu}z].
   \end{align*}
Thus, Eq.~\eqref{yqq2} can be rewritten as
   \begin{align*}
      2\left([h_{\lambda}z]\circ_{-\partial-\mu}[x_{\gamma}y]\right)=[(h\circ_{\lambda}[x_{\gamma}y])_{\lambda+\mu}z]+[h_{\lambda}(z\circ_{-\partial-\mu}[x_{\gamma}y])].
   \end{align*}
Similarly, by Eq.~\eqref{rule5}, we have
   \begin{align*}
      2([h_{\lambda}x]\circ_{\lambda+\gamma}[y_{\mu-\gamma}z])&=[(h\circ_{\lambda}[y_{\mu-\gamma}z])_{-\partial-\gamma}x]+[h_{{\lambda}}(x\circ_{\gamma}[y_{\mu-\gamma}z])],\\
      2([h_{\lambda}y]\circ_{\lambda+\mu-\gamma}[z_{-\partial-\gamma}x])&=[(h\circ_{\lambda}[z_{-\partial-\gamma}x])_{-\partial-\mu+\gamma}y]+[h_{{\lambda}}(y\circ_{\mu-\gamma}[z_{-\partial-\gamma}x])].
   \end{align*}
Upon summing the three identities above and applying Eqs.~\eqref{eqh} and \eqref{eqs}, we get Eq.~\eqref{eql}.

Finally, we turn to the proof of Eqs.~\eqref{ss1} and \eqref{ss2}. Since $(\mathcal{L} ,\circ_\lambda)$ is a {commutative associative conformal algebra}, we have
   \begin{align*}
        \left(u\circ_{\lambda}v\right)\circ_{\lambda+\gamma}[x_{\mu-\lambda}y]=u\circ_{\lambda}\left(v\circ_{\gamma}[x_{\mu-\lambda}y]\right).
   \end{align*}
By Eq.~\eqref{rule3}, we obtain
   \begin{align}\label{qq1}
      {4}(u\circ_{\lambda}\left(v\circ_{\gamma}[x_{\mu-\lambda}y]\right))=[(u&\circ_{\lambda}x)_{\mu}(v\circ_{\gamma}y)]+[(v\circ_{\gamma}x)_{\gamma+\mu-\lambda}(u\circ_{\lambda}y)] \nonumber\\
      &+[(u\circ_{\lambda}(v\circ_{\gamma}x))_{\gamma+\mu}y]+[x_{\mu-\lambda}(u\circ_{\lambda}(v\circ_{\gamma}y))].
   \end{align}
Observe that
   \begin{align}\label{qq2}
      {2}\left(u\circ_{\lambda}v\right)\circ_{\lambda+\gamma}[x_{\mu-\lambda}y]&=[((u\circ_{\lambda}v)\circ_{\lambda+\gamma}x)_{\mu+\gamma}y]+[x_{\mu-\lambda}((u\circ_{\lambda}v)\circ_{\lambda+\gamma}y)] \nonumber\\
      &=[(u\circ_{\lambda}(v\circ_{\gamma}x))_{\gamma+\mu}y]+[x_{\mu-\lambda}(u\circ_{\lambda}(v\circ_{\gamma}y))].
   \end{align}
Then Eq.~\eqref{ss1} follows from taking the difference between Eq.~\eqref{qq1} and Eq.~\eqref{qq2}.

Note that the first term on the left-hand side of Eq.~\eqref{ss1} can be rewritten as
   \begin{align*}
      [(u\circ_{\lambda}x)_{\mu}(v\circ_{\gamma}y)]=[(x\circ_{-\partial-\lambda}u)_{\mu}(v\circ_{\gamma}y)]
      =[(x\circ_{\mu-\lambda}u)_{\mu}(v\circ_{\gamma}y)]
   \end{align*}
and by Eq.~\eqref{rule3}, we obtain
   \begin{align}\label{f5}
      [(x\circ_{\mu-\lambda}u)_{\mu}(v\circ_{\gamma}y)]={2}(x\circ_{\mu-\lambda}[u_{\lambda}(v\circ_{\gamma}y)])-[u_{\lambda}(x\circ_{\mu-\lambda}(v\circ_{\gamma}y))].
   \end{align}
For the second term, by Eq.~\eqref{rule5}, we get
   \begin{align}\label{f6}
      [(v\circ_{\gamma}x)_{{\gamma+\mu-\lambda}}(u\circ_{\lambda}y)]={2}([(v\circ_{\gamma}x)_{\gamma+\mu-\lambda}u]\circ_{\gamma+\mu}y)+[u_{\lambda}((v\circ_{\gamma}x)\circ_{\gamma+\mu-\lambda}y)].
   \end{align}
Since
   \begin{align*}
      x\circ_{\mu-\lambda}(v\circ_{\gamma}y)&=(x\circ_{\mu-\lambda}v)\circ_{\gamma+\mu-\lambda}y=(v\circ_{-\partial-\mu+\lambda}x)\circ_{\gamma+\mu-\lambda}y
      =(v\circ_{\gamma}x)\circ_{\gamma+\mu-\lambda}y,
   \end{align*}
we have 
   \begin{align*}
        [u_{\lambda}(x\circ_{\mu-\lambda}(v\circ_{\gamma}y))]=[u_{\lambda}((v\circ_{\gamma}x)\circ_{\gamma+\mu-\lambda}y)].
   \end{align*}
Hence, by taking the sum of Eqs.~\eqref{f5} and \eqref{f6} and applying Eq.~\eqref{ss1}, we arrive at Eq.~\eqref{ss2}.
\end{proof}

\subsection{Tensor products of transposed Poisson conformal algebras}
Now we define the tensor product of transposed Poisson conformal algebras as follows:
\begin{theorem}
Let $(\mathcal{L}_1 ,\circ^{1}_{\lambda},[\cdot_{\lambda}\cdot]^1)$ and $(\mathcal{L}_2 ,\circ^{2}_{\lambda},[\cdot_{\lambda}\cdot]^2)$ be two transposed Poisson conformal algebras.
Set $\mathcal L:=\mathcal L_1\otimes_{\mathbb C[\partial]} \mathcal L_2$ and the $\mathbb C[\partial]$-module structure on $\mathcal L$ is given by
\begin{align*}
\partial(a\otimes b):=(\partial a)\otimes b = a\otimes (\partial b),
\quad a\in\mathcal L_1,\, b\in\mathcal L_2 .
\end{align*}
Define the $\lambda$-product and the $\lambda$-bracket on $\mathcal L$ respectively by
\begin{gather*}
(a_1\otimes a_2)\circ_\lambda (b_1\otimes b_2)
= (a_1\circ^{1}_\lambda b_1)\otimes (a_2\circ^{2}_\lambda b_2),\\
[(a_1\otimes a_2)_\lambda (b_1\otimes b_2)]
= [{a_1}_{\lambda}b_1]^{1}\otimes (a_2\circ^{2}_\lambda b_2)+(a_1\circ^{1}_\lambda b_1)\otimes [{a_2}_{\lambda}b_2]^{2},
\end{gather*}
for all $a_1,b_1 \in \mathcal{L}_1$, $a_2,b_2 \in \mathcal{L}_2$.
Then $(\mathcal L,\circ_\lambda,[\cdot_\lambda\cdot])$ is a transposed Poisson conformal algebra.
\end{theorem}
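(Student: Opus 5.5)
The plan is to verify the axioms of a TPCA one at a time for $\mathcal L=\mathcal L_1\otimes_{\mathbb C[\partial]}\mathcal L_2$. The transposed conformal Leibniz rule turns out to be the easy part. The Jacobi identity for the new $\lambda$-bracket is where the transposed structure of the factors is really needed.

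\emph{Step 1: well-definedness.} Since $\partial(a\otimes b)=(\partial a)\otimes b=a\otimes(\partial b)$, I first check that both operations respect the $\mathbb C[\partial]$-balancing. Replacing $a_1$ by $\partial a_1$ multiplies $a_1\circ^1_\lambda b_1$ and $[{a_1}_\lambda b_1]^1$ by $-\lambda$. Replacing $a_2$ by $\partial a_2$ multiplies $a_2\circ^2_\lambda b_2$ and $[{a_2}_\lambda b_2]^2$ by $-\lambda$ as well. So the two sides agree, and the same computation with $\partial$ on the second argument gives the factor $\partial+\lambda$. The latter is compatible because $\partial$ acts diagonally and by the Leibniz rule on tensor products. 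This gives well-definedness and conformal sesquilinearity.

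\emph{Step 2: commutative associative and skew-symmetric structure.} Commutativity and associativity of $\circ_\lambda$ hold factorwise, since the $\lambda$ (and $-\partial-\lambda$) substitutions act identically in both factors. Skew-symmetry of the bracket follows termwise: the first summand is skew because $[\cdot_\lambda\cdot]^1$ is skew and $\circ^2$ is commutative, and symmetrically for the second summand.

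\emph{Step 3: the transposed Leibniz rule \eqref{rule3}.} I expand both sides. The left side $2\,(a\circ_\lambda[b_\mu c])$ equals
\[2(a_1\circ^1_\lambda[{b_1}_\mu c_1]^1)\otimes(a_2\circ^2_\lambda(b_2\circ^2_\mu c_2))+2(a_1\circ^1_\lambda(b_1\circ^1_\mu c_1))\otimes(a_2\circ^2_\lambda[{b_2}_\mu c_2]^2).\]
The right side has four terms. Using associativity \eqref{ly} and Remark \ref{rem3}, each $\circ\circ$-factor can be rewritten as $a_i\circ_\lambda(b_i\circ_\mu c_i)$:
\[(a_i\circ_\lambda b_i)\circ_{\lambda+\mu}c_i=a_i\circ_\lambda(b_i\circ_\mu c_i)=b_i\circ_\mu(a_i\circ_\lambda c_i).\]
The terms then group as
\[\bigl([(a_1\circ_\lambda b_1)_{\lambda+\mu}c_1]+[{b_1}_\mu(a_1\circ_\lambda c_1)]\bigr)\otimes\bigl(a_2\circ_\lambda(b_2\circ_\mu c_2)\bigr)\]
plus the analogous group with the roles of the two factors swapped. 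Applying \eqref{rule3} in each factor finishes this step.

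\emph{Step 4: the Jacobi identity (main obstacle).} I expand $[a_\lambda[b_\mu c]]-[[a_\lambda b]_{\lambda+\mu}c]-[b_\mu[a_\lambda c]]$ into three kinds of terms.

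\textbf{Type (i).} Terms of the form $[[\,]]\otimes\circ\circ$. After normalizing the $\circ\circ$-factor via associativity and Remark \ref{rem3}, these cancel by the Jacobi identity in $\mathcal L_1$.

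\textbf{Type (ii).} Terms of the form $\circ\circ\otimes[[\,]]$, which cancel in the same way using $\mathcal L_2$.

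\textbf{Type (iii).} Mixed terms of the shape $(\text{bracket}\circ)\otimes(\circ\text{ bracket})$ or $(\circ\text{ bracket})\otimes(\text{bracket}\circ)$, for example $[{a_1}_\lambda(b_1\circ_\mu c_1)]\otimes(a_2\circ_\lambda[{b_2}_\mu c_2])$. Here the plan is to use \eqref{rule3} and \eqref{rule5} in one factor to move brackets into the pattern $x\circ[y\,z]$. These then recombine into cyclic sums, which vanish by the identities \eqref{eqh}/\eqref{eqw} of the preceding theorem. Where both factors carry a bracket inside a product, I would use \eqref{ss1} and \eqref{ss2} instead.

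The delicate point is bookkeeping of the spectral parameters $\lambda,\mu,-\partial-\lambda-\mu$ so that the cyclic sums literally match \eqref{eqh}. To manage this, I would first carry out the argument in the classical (non-conformal) case to identify which pairs of mixed terms combine. I would then redo it with $\lambda$-subscripts, systematically rewriting every term in the normal form $x_1\circ_\lambda(\cdots)$ using Remark \ref{rem1}.
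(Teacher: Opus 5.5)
Your plan is correct and essentially the paper's proof: you verify the transposed conformal Leibniz rule factorwise (the paper uses the equivalent form \eqref{rule5}), and you split the Jacobi identity into pure terms, killed by associativity, Remark~\ref{rem3} and the factor Jacobi identities, and mixed terms, handled by \eqref{rule3}, \eqref{rule5} and \eqref{eqh}. In that mixed step the two groups reduce to $\pm2\bigl(([{a_1}_\lambda b_1]^1\circ^1_{\lambda+\mu}c_1)\otimes(a_2\circ^2_\lambda[{b_2}_\mu c_2]^2)-(a_1\circ^1_\lambda[{b_1}_\mu c_1]^1)\otimes([{a_2}_\lambda b_2]^2\circ^2_{\lambda+\mu}c_2)\bigr)$ and simply cancel each other, so \eqref{ss1} and \eqref{ss2} are never needed; also, sesquilinearity in the second slot holds because $(\partial x)\otimes y=x\otimes(\partial y)$ in $\otimes_{\mathbb C[\partial]}$, not because $\partial$ acts by a Leibniz rule, which would in fact break it.
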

\begin{proof}
Let $a_1,b_1,c_1 \in \mathcal{L}_1$, $a_2,b_2,c_2 \in \mathcal{L}_2$. It is easy to verify that $(\mathcal L,\circ_\lambda)$ is a commutative associative conformal algebra.
Then we show that $(\mathcal L,[\cdot_\lambda\cdot])$ is a Lie conformal algebra. 
\begin{itemize}
\item[(a)] {\bf{Conformal sesquilinearity}:}
\begin{align*}
&[\partial(a_1\otimes a_2)_\lambda (b_1\otimes b_2)]=[((\partial a_1)\otimes a_2)_\lambda (b_1\otimes b_2)]
= [{(\partial a_1)}_{\lambda}b_1]^{1}\otimes (a_2\circ^{2}_\lambda b_2)\\
&+((\partial a_1)\circ^{1}_\lambda b_1)\otimes [{a_2}_{\lambda}b_2]^{2}
= -\lambda([{a_1}_{\lambda}b_1]^{1}\otimes (a_2\circ^{2}_\lambda b_2))-\lambda((a_1\circ^{1}_\lambda b_1)\otimes [{a_2}_{\lambda}b_2]^{2})\\
&=-\lambda ([{a_1}_{\lambda}b_1]^{1}\otimes (a_2\circ^{2}_\lambda b_2)+(a_1\circ^{1}_\lambda b_1)\otimes [{a_2}_{\lambda}b_2]^{2})
=-\lambda [(a_1\otimes a_2)_\lambda (b_1\otimes b_2)].
\end{align*}
\item[(b)] {\bf{Skew-symmetry}:} 
\begin{align*}
&[(b_1\otimes b_2)_{-\partial-\lambda} (a_1\otimes a_2)]=[{b_1}_{-\partial-\lambda}a_1]^{1}\otimes (b_2\circ^{2}_{-\partial-\lambda} a_2)+(b_1\circ^{1}_{-\partial-\lambda} a_1)\otimes [{b_2}_{-\partial-\lambda}a_2]^{2}\\
&=-[{a_1}_{\lambda}b_1]^{1}\otimes (a_2\circ^{2}_\lambda b_2)-(a_1\circ^{1}_\lambda b_1)\otimes [{a_2}_{\lambda}b_2]^{2}
=-[(a_1\otimes a_2)_\lambda (b_1\otimes b_2)].
\end{align*}
\item[(c)] {\bf{Jacobi identity}:} For the Jacobi identity, we compute
\begin{align*}
&[[(a_1\otimes a_2)_\lambda (b_1\otimes b_2)]_{\lambda+\mu} (c_1\otimes c_2)]+[(b_1\otimes b_2)_\mu [(a_1\otimes a_2)_\lambda (c_1\otimes c_2)]]\\
&-[(a_1\otimes a_2)_\lambda [(b_1\otimes b_2)_\mu (c_1\otimes c_2)]]
=(Y1)+(Y2)+(Q1)+(Q2),
\end{align*}
\end{itemize}
where
\begin{align*}
(Y1)&=[{[{a_1}_{\lambda}b_1]^{1}}_{\lambda+\mu}c_1]^{1}\otimes ((a_2\circ^{2}_\lambda b_2)\circ^{2}_{\lambda+\mu} c_2)+[{b_1}_{\mu}[{a_1}_{\lambda}c_1]^{1}]^{1}\otimes (b_2\circ^{2}_\mu (a_2\circ^{2}_\lambda c_2))\\
&-[{a_1}_{\lambda}[{b_1}_{\mu}c_1]^{1}]^{1}\otimes (a_2\circ^{2}_\lambda (b_2\circ^{2}_\mu c_2)),\\
(Y2)&=((a_1\circ^{1}_\lambda b_1)\circ^{1}_{\lambda+\mu} c_1)\otimes [{[{a_2}_{\lambda}b_2]^{2}}_{\lambda+\mu}c_2]^{2}+(b_1\circ^{1}_\mu (a_1\circ^{1}_\lambda c_1))\otimes [{b_2}_{\mu}[{a_2}_{\lambda}c_2]^{2}]^{2}\\
&-(a_1\circ^{1}_\lambda (b_1\circ^{1}_\mu c_1))\otimes [{a_2}_{\lambda}[{b_2}_{\mu}c_2]^{2}]^{2},\\
(Q1)&=([{a_1}_{\lambda}b_1]^{1}\circ^{1}_{\lambda+\mu} c_1)\otimes [{(a_2\circ^{2}_\lambda b_2)}_{\lambda+\mu}c_2]^{2}+(b_1\circ^{1}_\mu [{a_1}_{\lambda}c_1]^{1})\otimes [{b_2}_{\mu}(a_2\circ^{2}_\lambda c_2)]^{2}\\
&-(a_1\circ^{1}_\lambda [{b_1}_{\mu}c_1]^{1})\otimes [{a_2}_{\lambda}(b_2\circ^{2}_\mu c_2)]^{2},\\
(Q2)&=[{(a_1\circ^{1}_\lambda b_1)}_{\lambda+\mu}c_1]^{1}\otimes ([{a_2}_{\lambda}b_2]^{2}\circ^{2}_{\lambda+\mu} c_2)+[{b_1}_{\mu}(a_1\circ^{1}_\lambda c_1)]^{1}\otimes (b_2\circ^{2}_\mu [{a_2}_{\lambda}c_2]^{2})\\
&-[{a_1}_{\lambda}(b_1\circ^{1}_\mu c_1)]^{1}\otimes (a_2\circ^{2}_\lambda [{b_2}_{\mu}c_2]^{2}).
\end{align*}

Since $(\mathcal{L}_1 ,\circ^{1}_{\lambda})$, $(\mathcal{L}_2 ,\circ^{2}_{\lambda})$ are commutative associative conformal algebras and 
$(\mathcal{L}_1,[\cdot_{\lambda}\cdot]^1)$, $(\mathcal{L}_2,[\cdot_{\lambda}\cdot]^2)$ are Lie conformal algebras, $(Y1)$ and $(Y2)$ are zero.
By Eq.~\eqref{eqh}, we obtain
\begin{align*}
(b_1\circ^{1}_\mu [{a_1}_{\lambda}c_1]^{1})\otimes [{b_2}_{\mu}(a_2\circ^{2}_\lambda c_2)]^{2}
&=(a_1\circ^{1}_\lambda [{b_1}_{\mu}c_1]^{1}+[{a_1}_{\lambda}b_1]^{1}\circ^{1}_{\lambda+\mu}c_1)\otimes [{b_2}_{\mu}(a_2\circ^{2}_\lambda c_2)]^{2},\\
[{b_1}_{\mu}(a_1\circ^{1}_\lambda c_1)]^{1}\otimes (b_2\circ^{2}_\mu [{a_2}_{\lambda}c_2]^{2})
&=[{b_1}_{\mu}(a_1\circ^{1}_\lambda c_1)]^{1}\otimes (a_2\circ^{2}_\lambda [{b_2}_{\mu}c_2]^{2}+[{a_2}_{\lambda}b_2]^{2}\circ^{2}_{\lambda+\mu}c_2).
\end{align*}
Substituting the above equations into $(Q1)$ and $(Q2)$ respectively, we get
\begin{align*}
(Q1)&=(a_1\circ^{1}_\lambda [{b_1}_{\mu}c_1]^{1})\otimes ([{b_2}_{\mu}(a_2\circ^{2}_\lambda c_2)]^{2}-[{a_2}_{\lambda}(b_2\circ^{2}_\mu c_2)]^{2})\\
&+([{a_1}_{\lambda}b_1]^{1}\circ^{1}_{\lambda+\mu} c_1)\otimes ([{(a_2\circ^{2}_\lambda b_2)}_{\lambda+\mu}c_2]^{2}+[{b_2}_{\mu}(a_2\circ^{2}_\lambda c_2)]^{2}),\\
(Q2)&=([{b_1}_{\mu}(a_1\circ^{1}_\lambda c_1)]^{1}-[{a_1}_{\lambda}(b_1\circ^{1}_\mu c_1)]^{1})\otimes (a_2\circ^{2}_\lambda [{b_2}_{\mu}c_2]^{2})\\
&+([{(a_1\circ^{1}_\lambda b_1)}_{\lambda+\mu}c_1]^{1}+[{b_1}_{\mu}(a_1\circ^{1}_\lambda c_1)]^{1})\otimes ([{a_2}_{\lambda}b_2]^{2}\circ^{2}_{\lambda+\mu} c_2).
\end{align*}
By Eqs.~\eqref{rule3} and \eqref{rule5}, we have
\begin{align*}
(Q1)&=-2(a_1\circ^{1}_\lambda [{b_1}_{\mu}c_1]^{1})\otimes ([{a_2}_{\lambda}b_2]^{2}\circ^2_{\lambda+\mu}c_2)
+2([{a_1}_{\lambda}b_1]^{1}\circ^{1}_{\lambda+\mu} c_1)\otimes (a_2\circ^{2}_\lambda [{b_2}_{\mu}c_2]^{2}),\\
(Q2)&=-2([{a_1}_{\lambda}b_1]^{1}\circ^1_{\lambda+\mu}c_1)\otimes (a_2\circ^{2}_\lambda [{b_2}_{\mu}c_2]^{2})
+2(a_1\circ^{1}_\lambda [{b_1}_{\mu}c_1]^{1})\otimes ([{a_2}_{\lambda}b_2]^{2}\circ^{2}_{\lambda+\mu} c_2).
\end{align*}
Hence $(Q1)+(Q2)=0$. Therefore, $(\mathcal L,[\cdot_\lambda\cdot])$ is a Lie conformal algebra.

Next, we show that the relation \eqref{rule5} holds. We compute separately 
\begin{align*}
&[(a_1\otimes a_2)_\lambda ((b_1\otimes b_2)\circ_\mu (c_1\otimes c_2))]
=[(a_1\otimes a_2)_\lambda ((b_1\circ^{1}_\mu c_1)\otimes (b_2\circ^{2}_\mu c_2))]\\
&=[{a_1}_{\lambda}(b_1\circ^{1}_\mu c_1)]^{1}\otimes (a_2\circ^{2}_\lambda (b_2\circ^{2}_\mu c_2))+(a_1\circ^{1}_\lambda (b_1\circ^{1}_\mu c_1))\otimes [{a_2}_{\lambda}(b_2\circ^{2}_\mu c_2)]^{2},\\
&[(b_1\otimes b_2)_\mu ((a_1\otimes a_2)\circ_\lambda (c_1\otimes c_2))]
=[(b_1\otimes b_2)_\mu ((a_1\circ^{1}_\lambda c_1)\otimes (a_2\circ^{2}_\lambda c_2))]\\
&=[{b_1}_{\mu}(a_1\circ^{1}_\lambda c_1)]^{1}\otimes (b_2\circ^{2}_\mu (a_2\circ^{2}_\lambda c_2))+(b_1\circ^{1}_\mu (a_1\circ^{1}_\lambda c_1))\otimes [{b_2}_{\mu}(a_2\circ^{2}_\lambda c_2)]^{2}.
\end{align*}
Then we have 
\begin{align*}
&\quad \ [(a_1\otimes a_2)_\lambda ((b_1\otimes b_2)\circ_\mu (c_1\otimes c_2))]-[(b_1\otimes b_2)_\mu ((a_1\otimes a_2)\circ_\lambda (c_1\otimes c_2))]\\
&=([{a_1}_{\lambda}(b_1\circ^{1}_\mu c_1)]^{1}-[{b_1}_{\mu}(a_1\circ^{1}_\lambda c_1)]^{1})\otimes ((a_2\circ^{2}_\lambda b_2)\circ^{2}_{\lambda+\mu} c_2)\\
&\hphantom{=~}+((a_1\circ^{1}_\lambda b_1)\circ^{1}_{\lambda+\mu} c_1)\otimes ([{a_2}_{\lambda}(b_2\circ^{2}_\mu c_2)]^{2}-[{b_2}_{\mu}(a_2\circ^{2}_\lambda c_2)]^{2})\\
&=2([{a_1}_{\lambda}b_1]^{1}\circ^1_{\lambda+\mu}c_1)\otimes ((a_2\circ^{2}_\lambda b_2)\circ^{2}_{\lambda+\mu} c_2)
+2((a_1\circ^{1}_\lambda b_1)\circ^{1}_{\lambda+\mu} c_1)\otimes ([{a_2}_{\lambda}b_2]^{2}\circ^2_{\lambda+\mu}c_2)\\
&=2([{a_1}_{\lambda}b_1]^{1}\otimes (a_2\circ^{2}_\lambda b_2))\circ_{\lambda+\mu} (c_1\otimes c_2)
+2((a_1\circ^{1}_\lambda b_1)\otimes [{a_2}_{\lambda}b_2]^{2})\circ_{\lambda+\mu} (c_1\otimes c_2)\\
&=2([{a_1}_{\lambda}b_1]^{1}\otimes (a_2\circ^{2}_\lambda b_2)+(a_1\circ^{1}_\lambda b_1)\otimes [{a_2}_{\lambda}b_2]^{2})\circ_{\lambda+\mu} (c_1\otimes c_2)\\
&=2[(a_1\otimes a_2)_\lambda (b_1\otimes b_2)]\circ_{\lambda+\mu} (c_1\otimes c_2).
\end{align*}

Altogether, $(\mathcal L,\circ_\lambda,[\cdot_\lambda\cdot])$ is a transposed Poisson conformal algebra.
\end{proof}

\subsection{Relation with Hom-Lie conformal algebras}

\begin{proposition}
   Let $(\mathcal{L} ,\circ_\lambda,[\cdot_{\lambda}\cdot])$ be a transposed Poisson conformal algebra. For any $h\in \mathcal{L}$, define a linear map
   $\alpha_h$ : $\mathcal{L}  \rightarrow \mathcal{L}$ by
   \begin{align*}
      \alpha_h(x):=(h\circ_\lambda x)|_{\lambda=0}=h\circ_{(0)} x, \quad \forall ~x\in\mathcal{L}.
   \end{align*}
   Then $(\mathcal{L} ,[\cdot_{\lambda}\cdot],\alpha_h)$ is a Hom-Lie conformal algebra.
\end{proposition}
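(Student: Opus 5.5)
The plan is to check the three Hom-Lie conformal axioms for $(\mathcal{L},[\cdot_\lambda\cdot],\alpha_h)$ one at a time. Only the Hom-Jacobi identity needs real work. I would obtain it by specializing identity \eqref{equ} from the theorem on identities in transposed Poisson conformal algebras at $\lambda=0$.

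First, $\alpha_h$ commutes with $\partial$. By conformal sesquilinearity of $\circ_\lambda$,
$$h\circ_\lambda(\partial x)=(\partial+\lambda)(h\circ_\lambda x).$$
Both sides are polynomials in $\lambda$, so we may compare constant terms, which gives $\alpha_h(\partial x)=\partial\,\alpha_h(x)$. Conformal sesquilinearity and skew-symmetry of the $\lambda$-bracket do not involve $\alpha_h$. They therefore hold because $(\mathcal{L},[\cdot_\lambda\cdot])$ is a Lie conformal algebra.

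For the Hom-Jacobi identity, I start from \eqref{equ}. It is an identity in $\mathcal{L}[\lambda,\mu,\gamma]$, and $\lambda$ appears only through $h\circ_\lambda(\cdot)$ and in shifts of the outer bracket variables. So I may set $\lambda=0$, which amounts to taking the $\lambda^0$ coefficient. Since $h\circ_0 z=\alpha_h(z)$, this gives
\begin{align*}
[[x_\gamma y]_\mu \alpha_h(z)]+[[y_{\mu-\gamma}z]_{-\partial-\gamma}\alpha_h(x)]+[[z_{-\partial-\gamma}x]_{-\partial-\mu+\gamma}\alpha_h(y)]=0.
\end{align*}
Next I use skew-symmetry $[u_\nu v]=-[v_{-\partial-\nu}u]$ on the last two terms.

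The second term becomes $-[\alpha_h(x)_\gamma[y_{\mu-\gamma}z]]$.

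The third term becomes $-[\alpha_h(y)_{\mu-\gamma}[z_{-\partial-\gamma}x]]$. Applying skew-symmetry again to the inner bracket turns this into $+[\alpha_h(y)_{\mu-\gamma}[x_\gamma z]]$.

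Now rename $x=a$, $y=b$, $z=c$, and set $\gamma\mapsto\lambda$, $\mu\mapsto\lambda+\mu$. The identity becomes
$$[\alpha_h(a)_\lambda[b_\mu c]]=[[a_\lambda b]_{\lambda+\mu}\alpha_h(c)]+[\alpha_h(b)_\mu[a_\lambda c]],$$
which is exactly the Hom-Jacobi identity.

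The main obstacle is bookkeeping. The substitution $\lambda=0$ and the two uses of skew-symmetry involve subscripts containing $\partial$, so I would be careful about where $\partial$ acts. Applying $[u_\nu v]=-[v_{-\partial-\nu}u]$ with $\nu=-\partial-\gamma$ must return subscript $\gamma$, with $\partial$ acting on the whole expression. If that convention causes trouble, I would fall back on the $n$-th product form: compare coefficients of $\gamma^{(m)}\mu^{(n)}$, or derive the identity directly from the transposed conformal Leibniz rule \eqref{rule3} at $\lambda=0$ together with the Jacobi identity, following the proof of \eqref{eqs} and \eqref{equ}.
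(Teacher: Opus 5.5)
Your proposal is correct and follows the paper's proof essentially verbatim. Both arguments check $\alpha_h\partial=\partial\alpha_h$ from the sesquilinearity of $\circ_\lambda$, then specialize identity \eqref{equ} at $\lambda=0$ and rename $\gamma\mapsto\lambda$, $\mu\mapsto\lambda+\mu$. The only difference is that you write out the two applications of skew-symmetry that the paper compresses into a single ``That is'', and your signs and subscripts in those steps are right.
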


\begin{proof}
It is well known that $(\mathcal{L},[\cdot_{\lambda}\cdot])$ is a Lie conformal algebra satisfying the axioms of conformal sesquilinearity and skew-symmetry. For all $x\in\mathcal{L}$,
   \begin{align*}
      \alpha_h (\partial x)=(h\circ_\lambda \partial x)|_{\lambda=0}=(\partial+\lambda)(h\circ_\lambda x)|_{\lambda=0}=\partial(h\circ_{(0)} x)=\partial (\alpha_hx).
   \end{align*}
Hence, $\alpha_h\partial=\partial\alpha_h$. By taking $\gamma:=\lambda, ~\mu:=\lambda+\mu,~\lambda:=0$ in Eq.~\eqref{equ}, we obtain
   \begin{align*}
      [[x_{\lambda}y]_{\lambda+\mu}(h\circ_{(0)}z)]+[[y_{\mu}z]_{-\partial-\lambda}(h\circ_{(0)}x)]+[[z_{-\partial-\lambda}x]_{-\partial-\mu}(h\circ_{(0)}y)]=0.
   \end{align*}
That is,
   \begin{align*}
        [[x_{\lambda}y]_{\lambda+\mu}\alpha_h(z)]+[\alpha_h(y)_{\mu}[x_{\lambda}z]]=[\alpha_h(x)_{\lambda}[y_{\mu}z]],
   \end{align*}
which satisfies the axiom of Hom-Jacobi identity. Therefore, $(\mathcal{L} ,[\cdot_{\lambda}\cdot],\alpha_h)$ is a Hom-Lie conformal algebra.
\end{proof}

\subsection{Compatibility conditions between PCA and TPCA}

\begin{proposition}
   Let $(\mathcal{L} ,\circ_\lambda)$ be a commutative associative conformal algebra and $(\mathcal{L} ,[\cdot_{\lambda}\cdot])$ be a Lie conformal algebra.
   Then $(\mathcal{L} ,\circ_\lambda,[\cdot_{\lambda}\cdot])$ is both a Poisson conformal algebra and a transposed Poisson conformal algebra if and only if
   \begin{align*}
      a\circ_{\lambda}[b_{\mu}c]=[b_{\mu-\lambda}a]\circ_{\mu}c=[(a\circ_{\lambda}b)_{\lambda+\mu}c]=0, \quad \forall ~a,b,c\in \mathcal{L}.
   \end{align*}
\end{proposition}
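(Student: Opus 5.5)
The argument splits into the two implications. The reverse one is immediate and the forward one carries all the content.

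\emph{Sufficiency.} Suppose all three expressions vanish identically. The conformal Leibniz rule \eqref{rule1} reads $[a_\lambda(b\circ_\mu c)]=[a_\lambda b]\circ_{\lambda+\mu}c+b\circ_\mu[a_\lambda c]$. By skew-symmetry and commutativity, its left-hand side equals $-[(b\circ_\mu c)_{-\partial-\lambda}a]$, which is of the form $[(a'\circ b')_{\cdot}c']$ and so vanishes. The first term on the right is of the form $[b'_{\cdot}a']\circ c'$ after a skew-symmetry, and the second is $b\circ_\mu[a_\lambda c]$; both vanish. All three terms of the transposed rule \eqref{rule3} vanish for the same reasons. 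The only thing to check is that the hypotheses are stated for all $a,b,c$ and all values of the spectral parameters. The substitutions $\lambda\mapsto-\partial-\lambda$ that arise are legitimate because each hypothesis is an identity of polynomials in two independent variables.

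\emph{Necessity.} Assume both \eqref{rule1} (equivalently \eqref{rule6}) and \eqref{rule3} hold. I will first show $a\circ_\lambda[b_\mu c]=0$.
\begin{enumerate}[(i)]
\item Rewrite \eqref{rule6} with suitable relabelling as $[(a\circ_\lambda b)_{\lambda+\mu}c]=b\circ_\mu[a_\lambda c]+a\circ_\lambda[b_\mu c]$.
\item Expand $[b_\mu(a\circ_\lambda c)]$ by \eqref{rule1} as $[b_\mu a]\circ_{\lambda+\mu}c+a\circ_\lambda[b_\mu c]$.
\item Substitute both into \eqref{rule3}. This gives
\[
2\,a\circ_\lambda[b_\mu c]=2\,a\circ_\lambda[b_\mu c]+b\circ_\mu[a_\lambda c]+[b_\mu a]\circ_{\lambda+\mu}c,
\]
that is, $b\circ_\mu[a_\lambda c]+[b_\mu a]\circ_{\lambda+\mu}c=0$.
\item Combine this with the cyclic identity \eqref{eqh} of Theorem 3.4, which holds because $\mathcal L$ is a TPCA. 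Rewrite $[b_\mu a]\circ_{\lambda+\mu}c$ as $c\circ_{-\partial-\lambda-\mu}[b_\mu a]=-c\circ_{-\partial-\lambda-\mu}[a_{-\partial-\mu}b]$, with the $\partial$ acting appropriately. After relabelling, (iii) says that two terms of the cyclic sum in \eqref{eqh} add to zero. Hence the third, $x\circ_\lambda[y_\mu z]$, vanishes.
\end{enumerate}

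The remaining two vanishings follow from this one.
\begin{itemize}
\item Since $[b_{\mu-\lambda}a]\circ_\mu c=c\circ_{-\partial-\mu}[b_{\mu-\lambda}a]$ by commutativity, it is again of the form $x\circ[y\,z]$ and therefore zero.
\item By (i), $[(a\circ_\lambda b)_{\lambda+\mu}c]$ is a sum of two terms of the form $x\circ[y\,z]$, so it vanishes too.
\end{itemize}

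The main obstacle is step (iv): matching the spectral parameters exactly when rewriting $[b_\mu a]\circ_{\lambda+\mu}c$ into the cyclic form of \eqref{eqh}. If that bookkeeping is awkward, I would first try to avoid \eqref{eqh} altogether. Interchanging $a$ and $b$ (with $\lambda\leftrightarrow\mu$) in (iii) gives a second relation. The plan is to combine it with (iii) and with \eqref{rule1} applied to $[a_\lambda(b\circ_\mu c)]$, and to use $b\circ_\mu(a\circ_\lambda c)=a\circ_\lambda(b\circ_\mu c)$ (Remark \ref{rem3}). Together these should isolate $a\circ_\lambda[b_\mu c]$ directly. This is the conformal version of the classical argument that a Poisson and transposed Poisson structure coincide only when $x\cdot[y,z]=0$.
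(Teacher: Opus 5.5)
Your proof is correct, and your necessity argument is shorter than the paper's.

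The parameter bookkeeping you flagged in step (iv) does go through. Because $(\partial w)\circ_{\lambda+\mu}c=-(\lambda+\mu)\,w\circ_{\lambda+\mu}c$, skew-symmetry gives $[b_\mu a]\circ_{\lambda+\mu}c=-[a_\lambda b]\circ_{\lambda+\mu}c$. So (iii) reads $b\circ_\mu[a_\lambda c]-[a_\lambda b]\circ_{\lambda+\mu}c=0$. Now take \eqref{eqh} with $(x,y,z)=(a,b,c)$ and apply skew-symmetry to its second term and commutativity to its third. It reads $a\circ_\lambda[b_\mu c]-b\circ_\mu[a_\lambda c]+[a_\lambda b]\circ_{\lambda+\mu}c=0$. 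Adding the two relations gives $a\circ_\lambda[b_\mu c]=0$ immediately, so your fallback plan is not needed.

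The paper uses the same ingredients but in a longer chain:
\begin{enumerate}[(1)]
\item It substitutes \eqref{rule1} into the form \eqref{rule5} of the transposed rule and invokes \eqref{eqw} to obtain $[b_{\mu-\lambda}a]\circ_\mu c=0$.
\item It feeds this into \eqref{eqh} to get the symmetry $a\circ_\lambda[b_\mu c]=b\circ_\mu[a_\lambda c]$.
\item Only then does it perform your substitution (i)--(iii), which together with that symmetry gives $a\circ_\lambda[b_\mu c]=-[b_\mu a]\circ_{\lambda+\mu}c=0$.
\end{enumerate}
Your route needs one Leibniz substitution into \eqref{rule3} and one use of the cyclic identity. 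The other two vanishings then follow as corollaries, exactly as you describe. The paper instead establishes the vanishing of $[b\,a]\circ c$ first and derives the rest from it, at the cost of an extra pass through \eqref{rule5} and \eqref{eqw}. In that sense your argument is strictly more economical.

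Your sufficiency argument is fine and more explicit than the paper's ``easily verified''. That includes your remark that the hypotheses amount to the vanishing of all the relevant $n$-th products, so substituting $-\partial-\lambda$ is harmless.
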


\begin{proof}
   The ``if" part can be easily verified. For the ``only if" part, let $a,b,c\in \mathcal{L}$. By Eq.~\eqref{rule5}, we have
   \begin{align*}
      2[a_{\lambda}b]\circ_{\mu}c=[a_{{\lambda}}(b\circ_{\mu-\lambda}c)]-[b_{\mu-\lambda}(a\circ_{\lambda}c)].
   \end{align*}
   By Eq.~\eqref{rule1}, we obtain
   \begin{align*}
      [a_{\lambda}(b\circ_{\mu-\lambda}c)]=[a_{\lambda}b]\circ_{\mu}c+b\circ_{\mu-\lambda}[a_{\lambda}c],  \quad
      [b_{\mu-\lambda}(a\circ_{\lambda}c)]=[b_{\mu-\lambda}a]\circ_{\mu}c+a\circ_{\lambda}[b_{\mu-\lambda}c].
   \end{align*}
   Thus, we have
   \begin{align*}
      [a_{\lambda}b]\circ_{\mu}c+a\circ_{\lambda}[b_{\mu-\lambda}c]-b\circ_{\mu-\lambda}[a_{\lambda}c]+[b_{\mu-\lambda}a]\circ_{\mu}c=0.
   \end{align*}
   By Eq.~\eqref{eqw}, we get
   \begin{align*}
      [a_{\lambda}b]\circ_{\mu}c+a\circ_{\lambda}[b_{\mu-\lambda}c]-b\circ_{\mu-\lambda}[a_{\lambda}c]=0.
   \end{align*}
   Hence, $[b_{\mu-\lambda}a]\circ_{\mu}c=0$. From this and Eq.~\eqref{eqh}, we get $a\circ_{\lambda}[b_{\mu}c]=b \circ_{\mu} [a _\lambda c]$. By Eq.~\eqref{rule3}, we have
   \begin{align*}
      2a\circ_{\lambda}[b_{\mu}c]=[(a\circ_{\lambda}b)_{\lambda+\mu}c]+[b_{\mu}(a\circ_{\lambda}c)].	
   \end{align*}
   By Eqs.~\eqref{rule1} and \eqref{rule6}, we obtain
   \begin{align*}
      [(a\circ_{\lambda}b)_{\lambda+\mu}c] = b \circ_{\mu} [a _\lambda c] + a \circ_\lambda [b _{\mu} c],  \quad
      [b_{\mu}(a\circ_{\lambda}c)] = [b_{\mu}a]\circ_{\lambda+\mu}c+a\circ_{\lambda}[b_{\mu}c].
   \end{align*}
   Thus, $a\circ_{\lambda}[b_{\mu}c]=-[b_{\mu}a]\circ_{\lambda+\mu}c=0$, $[(a\circ_{\lambda}b)_{\lambda+\mu}c]=2a\circ_{\lambda}[b_{\mu}c]=0$.
\end{proof}

\section{Constructions of transposed Poisson conformal algebras}\label{sec4}
In this section, we first show that a new transposed Poisson conformal algebra can be constructed from a given one by defining an appropriate binary operation.
Then, we introduce the notions of Novikov-Poisson conformal algebras, pre-Lie commutative conformal algebras, differential Novikov-Poisson conformal algebras, and pre-Lie Poisson conformal algebras,
which serve as the conformal analogues of Novikov-Poisson algebras \cite{Xu97}, pre-Lie commutative algebras \cite{MA14}, differential Novikov-Poisson algebras \cite{BCZ18}, and pre-Lie Poisson algebras \cite{Bai23}, respectively.
In addition, we present several constructions of TPCA arising from these algebraic structures.

\begin{proposition}
   Let $(\mathcal{L} ,\circ_\lambda,[\cdot_{\lambda}\cdot])$ be a transposed Poisson conformal algebra. For any $h\in \mathcal{L}$, define a new binary operation $[\cdot_{\lambda}\cdot]^h$ on $\mathcal{L}$ by
   \begin{align*}
      [x_{\lambda}y]^h:=(h\circ_{\mu}[x_{\lambda}y])|_{\mu=0}=h\circ_{(0)}[x_{\lambda}y], \quad \forall ~x,y\in \mathcal{L}.
   \end{align*}
   Then $(\mathcal{L} ,\circ_\lambda, [\cdot_{\lambda}\cdot]^h)$ is a transposed Poisson conformal algebra.
\end{proposition}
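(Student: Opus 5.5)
The commutative associative part $(\mathcal{L},\circ_\lambda)$ is unchanged, so three things need checking for $[\cdot_\lambda\cdot]^h$: it is a Lie conformal algebra, and it satisfies the transposed conformal Leibniz rule \eqref{rule3}. Throughout, write $\alpha_h(x)=h\circ_{(0)}x$, so that $[x_\lambda y]^h=\alpha_h([x_\lambda y])$. By the proof of the proposition on Hom-Lie conformal algebras, $\alpha_h$ commutes with $\partial$. Two more facts about $\alpha_h$ will be used repeatedly.
\begin{itemize}
\item Setting $\lambda=0$ in associativity \eqref{ly} gives $\alpha_h(a\circ_\mu b)=\alpha_h(a)\circ_\mu b$.
\item Setting $\lambda=0$ in Remark~\ref{rem3} gives $\alpha_h(a\circ_\mu b)=a\circ_\mu \alpha_h(b)$.
\end{itemize}

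\textbf{Sesquilinearity and skew-symmetry.} These follow at once from those of $[\cdot_\lambda\cdot]$, using $\alpha_h\partial=\partial\alpha_h$. For skew-symmetry, $\alpha_h$ is applied to $-[y_{-\lambda-\partial}x]$, and because $\alpha_h$ commutes with $\partial$ the substitution $\lambda\mapsto-\lambda-\partial$ passes through it.

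\textbf{Transposed Leibniz rule.} Apply $\alpha_h$ to \eqref{rule3} for $a,b,c$. The left side becomes $2\,a\circ_\lambda[b_\mu c]^h$ by the second bullet. The right side is $[(a\circ_\lambda b)_{\lambda+\mu}c]^h+[b_\mu(a\circ_\lambda c)]^h$. This is exactly \eqref{rule3} for the new bracket, so no further work is needed here.

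\textbf{Jacobi identity (the main obstacle).} We must show
\[
\alpha_h[x_\lambda \alpha_h[y_\mu z]] = \alpha_h[\alpha_h[x_\lambda y]_{\lambda+\mu}z]+\alpha_h[y_\mu\alpha_h[x_\lambda z]].
\]
\begin{enumerate}
\item Rewrite inner terms using \eqref{rule3} with $a=h$ at spectral parameter $0$:
\[
[x_\lambda \alpha_h(w)] = 2\alpha_h[x_\lambda w]-[\alpha_h(x)_\lambda w], \qquad [\alpha_h(w)_\lambda z]=2\alpha_h[w_\lambda z]-[w_\lambda\alpha_h(z)].
\]
Here the second formula is derived from the first via skew-symmetry.
\item Expand each of the three terms, so that every expression becomes either $\alpha_h^2$ of a double bracket of $x,y,z$, or $\alpha_h$ of a double bracket with one argument replaced by $\alpha_h(\cdot)$.
\item Collect the $\alpha_h^2$ terms. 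They should combine into $2\alpha_h^2(\text{Jacobiator})=0$.
\item Collect the remaining terms. These are $\alpha_h$ applied to
\[
[\alpha_h(x)_\lambda[y_\mu z]]-[[x_\lambda y]_{\lambda+\mu}\alpha_h(z)]-[\alpha_h(y)_\mu[x_\lambda z]]
\]
(up to sign). This vanishes by the Hom-Jacobi identity established in the preceding proposition, i.e.\ by \eqref{equ}.
\end{enumerate}

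If the bookkeeping does not close this way, the fallback is a direct argument. Write each term as $h\circ_{(0)}(h\circ_{(0)}[\cdot[\cdot\cdot]])$ plus corrections, and use identity \eqref{eqs} with $\lambda=0$, which controls brackets of the form $[(h\circ_{(0)}[x_\gamma y])_{\mu}z]$ cyclically. Combined with \eqref{equ}, this should produce the required cancellation. This Jacobi verification is the one step that requires genuine computation; the other axioms are immediate.
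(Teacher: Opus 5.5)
Your proposal is correct. For sesquilinearity, skew-symmetry and the transposed Leibniz rule it is essentially the paper's argument. The paper applies $h\circ_{(0)}$ to \eqref{rule5} and moves it inside via associativity at spectral parameter $0$; you apply it to \eqref{rule3} and use Remark~\ref{rem3} at $0$. These are interchangeable.

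The Jacobi identity is where you genuinely differ. The paper specializes \eqref{eqs} at $\lambda=0$. That identity is already the cyclic Jacobi identity for $[\cdot_\lambda\cdot]^h$ before the outer $h\circ_{(0)}$ is applied, so the paper is done in one line. You instead expand each term using \eqref{rule3} at spectral parameter $0$. Your bookkeeping closes exactly:
\begin{align*}
&\alpha_h[x_\lambda\alpha_h[y_\mu z]]-\alpha_h[(\alpha_h[x_\lambda y])_{\lambda+\mu}z]-\alpha_h[y_\mu\alpha_h[x_\lambda z]]\\
&=2\alpha_h^2\bigl([x_\lambda[y_\mu z]]-[[x_\lambda y]_{\lambda+\mu}z]-[y_\mu[x_\lambda z]]\bigr)
-\alpha_h\bigl([\alpha_h(x)_\lambda[y_\mu z]]-[[x_\lambda y]_{\lambda+\mu}\alpha_h(z)]-[\alpha_h(y)_\mu[x_\lambda z]]\bigr).
\end{align*}
The first bracket vanishes by the Jacobi identity and the second by the Hom-Jacobi identity. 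The sign is exactly minus, so your hedging is unnecessary, and so is the fallback, which is in fact the paper's route.

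Your computation is $\alpha_h$ applied to the $\lambda=0$ case of the relation \eqref{Id1}. In effect you pass from \eqref{equ} to \eqref{eqs}, whereas the paper passes in the other direction. Your route costs an extra expansion, but it makes explicit that the Jacobi identity for $[\cdot_\lambda\cdot]^h$ follows from three ingredients: the original Jacobi identity, \eqref{rule3} at spectral parameter $0$, and the Hom-Jacobi identity of the preceding proposition. Both routes rest on the same theorem of identities, so neither is more elementary.
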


\begin{proof}
Let $x,y,z,h\in \mathcal{L}$. We first show that $(\mathcal{L}, [\cdot_{\lambda}\cdot]^h)$ is a Lie conformal algebra.

   (a) {\bf Conformal sesquilinearity:}
   \begin{align*}
      [(\partial x)_{\lambda}y]^h=h\circ_{(0)}[(\partial x)_{\lambda}y]=h\circ_{(0)}(-\lambda[x_{\lambda}y])=-\lambda (h\circ_{(0)}[x_{\lambda}y])=-\lambda [x_{\lambda}y]^h.
   \end{align*}

   (b) {\bf Skew-symmetry:}
   \begin{align*}
      [x_{\lambda}y]^h=h\circ_{(0)}[x_{\lambda}y]=h\circ_{(0)}(-[y_{-\partial-\lambda}x])=-h\circ_{(0)}[y_{-\partial-\lambda}x]=-[y_{-\partial-\lambda}x]^h.
   \end{align*}

   (c) {\bf Jacobi identity:} By taking $\gamma:=\lambda, ~\mu:=\lambda+\mu,~\lambda:=0$ in Eq.~\eqref{eqs}, we obtain
   \begin{align*}
      &[(h\circ_{(0)}[x_{\lambda}y])_{\lambda+\mu}z]+[(h\circ_{(0)}[y_{\mu}z])_{-\partial-\lambda}x]+[(h\circ_{(0)}[z_{-\partial-\lambda}x])_{-\partial-\mu}y]=0.
   \end{align*}
   Thus, we have
   $$h\circ_{(0)}([(h\circ_{(0)}[x_{\lambda}y])_{\lambda+\mu}z]+[(h\circ_{(0)}[y_{\mu}z])_{-\partial-\lambda}x]+[(h\circ_{(0)}[z_{-\partial-\lambda}x])_{-\partial-\mu}y])=0.$$
    That is,
   \begin{align*}
      [{[x_{\lambda}y]^h}_{\lambda+\mu}z]^h+[{[y_{\mu}z]^h}_{-\partial-\lambda}x]^h+[{[z_{-\partial-\lambda}x]^h}_{-\partial-\mu}y]^h=0.
   \end{align*}

   Next we show that transposed conformal Leibniz rule holds, by Eq.~\eqref{rule5}, we obtain
     \begin{align*}
      & 2([x_{\lambda}y]^h\circ_{\mu}z)=2((h\circ_{(0)}[x_{\lambda}y])\circ_{\mu}z)=2(h\circ_{(0)}([x_{\lambda}y]\circ_{\mu}z))
      =h\circ_{(0)}(2([x_{\lambda}y]\circ_{\mu}z))\\
      &= h\circ_{(0)}([x_{{\lambda}}(y\circ_{\mu-\lambda}z)]-[y_{\mu-\lambda}(x\circ_{\lambda}z)])
      =h\circ_{(0)}[x_{{\lambda}}(y\circ_{\mu-\lambda}z)]-h\circ_{(0)}[y_{\mu-\lambda}(x\circ_{\lambda}z)]  \\
      &= [x_{{\lambda}}(y\circ_{\mu-\lambda}z)]^h-[y_{\mu-\lambda}(x\circ_{\lambda}z)]^h.
     \end{align*}
Therefore, $(\mathcal{L} ,\circ_\lambda, [\cdot_{\lambda}\cdot]^h)$ is a transposed Poisson conformal algebra.
\end{proof}

\begin{definition}
   A {\textbf{Novikov-Poisson conformal algebra}} $(\mathcal{N},\circ_{\lambda},\ast_{\lambda})$ is a $\mathbb{C}[\partial]$-module $\mathcal{N}$ endowed with a $\mathbb{C}$-bilinear map $\mathcal{N}  \otimes \mathcal{N}  \rightarrow \mathcal{N} [\lambda]$, denoted by $a\otimes b\mapsto a \circ_{\lambda}b$ and a $\mathbb{C}$-bilinear map $\mathcal{N}  \otimes \mathcal{N}  \rightarrow \mathcal{N} [\lambda]$,
   denoted by $a\otimes b\mapsto a\ast_\lambda b$ such that $(\mathcal{N} ,\circ_\lambda)$ is a {commutative associative conformal algebra}, $(\mathcal{N} ,\ast_\lambda)$ is a {Novikov conformal algebra} and the following equations hold for all $a,b,c \in \mathcal{N}$:
   \begin{align}
      &(a\circ_\lambda b)\ast_{\lambda+\mu}c=a\circ_\lambda(b\ast_\mu c), \label{f7}\\
      &(a\ast_\lambda b)\circ_{\lambda+\mu}c-a\ast_\lambda(b\circ_\mu c)=(b\ast_\mu a)\circ_{\lambda+\mu}c-b\ast_\mu(a\circ_\lambda c). \label{h8}
   \end{align}
\end{definition}

\begin{remark}\label{rem2}
   In analogy with Remark \ref{rem1}, the following properties always hold in a Novikov-Poisson conformal algebra $(\mathcal{N},\circ_{\lambda},\ast_{\lambda})$:
   \begin{align*}
         a\circ_{\lambda}(b\ast_{-\partial-\mu}c) & =(a\circ_{\lambda}b)\ast_{-\partial-\mu}c, \\
         a\circ_{-\partial-\lambda}(b\ast_{\mu}c) & =(a\circ_{-\partial-\mu}b)\ast_{-\partial+\mu-\lambda}c, \\
         a\circ_{-\partial-\lambda}(b\ast_{-\partial-\mu}c) & =(a\circ_{-\partial+\mu-\lambda}b)\ast_{-\partial-\mu}c.
   \end{align*}
\end{remark}

Taking the commutator in a Novikov-Poisson conformal algebra, we obtain
\begin{theorem}\label{The1.2}
   Let $(\mathcal{N} ,\circ_\lambda,\ast_\lambda)$ be a Novikov-Poisson conformal algebra. Define
   \begin{align}
      [a_{\lambda}b]=a\ast_{\lambda} b-b\ast_{-\partial-\lambda} a, \quad \forall ~a,b \in \mathcal{N}.
   \end{align}
Then $(\mathcal{N} ,\circ_\lambda,[\cdot_\lambda\cdot])$ is a transposed Poisson conformal algebra.
\end{theorem}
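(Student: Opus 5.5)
The plan has two parts. First, since $(\mathcal{N},\ast_\lambda)$ is a Novikov conformal algebra, it is in particular left-symmetric. Proposition~\ref{propo1} then shows directly that the commutator $[a_\lambda b]=a\ast_\lambda b-b\ast_{-\partial-\lambda}a$ makes $\mathcal{N}$ a Lie conformal algebra. The commutative associative structure $\circ_\lambda$ is part of the hypothesis. So the only thing left to verify is the transposed conformal Leibniz rule~\eqref{rule3}, that is,
\begin{align*}
2\,a\circ_\lambda[b_\mu c]=[(a\circ_\lambda b)_{\lambda+\mu}c]+[b_\mu(a\circ_\lambda c)].
\end{align*}

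Second, I will expand both sides in terms of $\ast_\lambda$. The left side becomes $2a\circ_\lambda(b\ast_\mu c)-2a\circ_\lambda(c\ast_{-\partial-\mu}b)$. By \eqref{f7} the first term equals $2(a\circ_\lambda b)\ast_{\lambda+\mu}c$. By Remark~\ref{rem2} the second equals $2(a\circ_\lambda c)\ast_{-\partial-\mu}b$.

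The right side is
\begin{align*}
(a\circ_\lambda b)\ast_{\lambda+\mu}c-c\ast_{-\partial-\lambda-\mu}(a\circ_\lambda b)+b\ast_\mu(a\circ_\lambda c)-(a\circ_\lambda c)\ast_{-\partial-\mu}b.
\end{align*}
After cancelling, it remains to show
\begin{align*}
(a\circ_\lambda b)\ast_{\lambda+\mu}c-(a\circ_\lambda c)\ast_{-\partial-\mu}b=b\ast_\mu(a\circ_\lambda c)-c\ast_{-\partial-\lambda-\mu}(a\circ_\lambda b).
\end{align*}

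This identity uses the Novikov right-commutativity together with \eqref{h8}. Rewrite $(a\circ_\lambda b)\ast_{\lambda+\mu}c=a\circ_\lambda(b\ast_\mu c)$ and $(a\circ_\lambda c)\ast_{-\partial-\mu}b=a\circ_\lambda(c\ast_{-\partial-\mu}b)$. By commutativity of $\circ_\lambda$, $a\circ_\lambda c=c\circ_{-\partial-\lambda}a$, and likewise for $a\circ_\lambda b$. Then apply \eqref{h8} in the form $b\ast_\mu(c\circ_\nu a)-(b\ast_\mu c)\circ_{\mu+\nu}a=c\ast_\nu(b\circ_\mu a)-(c\ast_\nu b)\circ_{\mu+\nu}a$, with $\nu$ specialized to $-\partial-\lambda-\mu$ via the $\partial$-shifted versions. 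This converts $b\ast_\mu(a\circ_\lambda c)-c\ast_{-\partial-\lambda-\mu}(a\circ_\lambda b)$ into $(b\ast_\mu c-c\ast_{-\partial-\mu}b)$ composed by $\circ$ with $a$. Commutativity of $\circ_\lambda$ and \eqref{f7} then turn this into $a\circ_\lambda(b\ast_\mu c)-a\circ_\lambda(c\ast_{-\partial-\mu}b)$, which matches the left-hand side.

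If this route does not close, the fallback is the Novikov right-commutativity $(a\ast_\lambda b)\ast_{\lambda+\mu}c=(a\ast_\lambda c)\ast_{-\mu-\partial}b$. The cleaner alternative is to work with $n$-th products, or to pass to the coefficient algebra, where the classical argument of Bai et al.\ applies.

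The main obstacle is the bookkeeping of the $\lambda$-parameters when \eqref{h8} is specialized to variables of the form $-\partial-\lambda-\mu$. Each such substitution must be justified by the $\partial$-shifted identities of Remark~\ref{rem1} and Remark~\ref{rem2}, which requires keeping track of whether $\partial$ acts on the whole expression or on an inner factor. I would carry out this step carefully, using sesquilinearity and checking each substitution against the commutativity rule $x\circ_\lambda y=y\circ_{-\partial-\lambda}x$.
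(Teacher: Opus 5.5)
Your proposal is correct and is essentially the paper's argument: the Lie structure comes from Proposition~\ref{propo1}, the \eqref{f7}-type identities of Remark~\ref{rem2} account for one of the two copies in the factor $2$, and \eqref{h8} supplies the other. The only difference is that the paper verifies the equivalent form \eqref{rule5}, where \eqref{h8} is needed only after the plain shift $\mu\mapsto\mu-\lambda$, whereas your direct check of \eqref{rule3} requires \eqref{h8} specialized at $\nu=-\partial-\lambda-\mu$; that substitution is legitimate (a $\mathbb{C}$-linear map on $\mathcal{N}[\mu,\nu]$) and yields $b\ast_\mu(a\circ_\lambda c)-c\ast_{-\partial-\lambda-\mu}(a\circ_\lambda b)=a\circ_\lambda[b_\mu c]$ as you claim, so no fallback is needed, and, contrary to your remark, Novikov right-commutativity is never used (only left-symmetry, \eqref{f7}, \eqref{h8} and commutativity of $\circ_\lambda$), which is why the same proof also gives Proposition~\ref{Pro2}.
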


\begin{proof}
   It is well known that $(\mathcal{N},[\cdot_\lambda\cdot])$ is a Lie conformal algebra (see Proposition \ref{propo1}). For all $a,b,c\in \mathcal{N}$, we have
   \begin{align*}
      &[a_{\lambda}b]\circ_{\mu}c=(a\ast_{\lambda} b-b\ast_{-\partial-\lambda} a)\circ_{\mu}c=(a\ast_{\lambda} b)\circ_{\mu}c-(b\ast_{-\partial-\lambda} a)\circ_{\mu}c, \\
      &[a_{{\lambda}}(b\circ_{\mu-\lambda}c)]=a\ast_{\lambda} (b\circ_{\mu-\lambda}c)-(b\circ_{\mu-\lambda}c)\ast_{-\partial-\lambda} a, \\
      &[b_{\mu-\lambda}(a\circ_{\lambda}c)]=b\ast_{\mu-\lambda} (a\circ_{\lambda}c)-(a\circ_{\lambda}c)\ast_{-\partial-\mu+\lambda} b.
   \end{align*}
   By Remark \ref{rem2}, we obtain
   \begin{align*}
      &(a\ast_{\lambda} b)\circ_{\mu}c=c\circ_{-\partial-\mu}(a\ast_{\lambda} b)=(c\circ_{-\partial-\lambda}a)\ast_{-\partial+\lambda-\mu}b=(a\circ_{\lambda}c)\ast_{-\partial-\mu+\lambda} b, \\
      &(b\ast_{-\partial-\lambda} a)\circ_{\mu}c=c\circ_{-\partial-\mu}(b\ast_{-\partial-\lambda} a)=(c\circ_{-\partial+\lambda-\mu}b)\ast_{-\partial-\lambda}a=(b\circ_{\mu-\lambda}c)\ast_{-\partial-\lambda}a.
   \end{align*}
   Thus, we have
   \begin{align*}
      [a_{{\lambda}}(b\circ_{\mu-\lambda}c)]-[b_{\mu-\lambda}(a\circ_{\lambda}c)]=a\ast_{\lambda} (b\circ_{\mu-\lambda}c)-b\ast_{\mu-\lambda} (a\circ_{\lambda}c)+[a_{\lambda}b]\circ_{\mu}c.
   \end{align*}
   By taking $\lambda:=\lambda,~\mu:=\mu-\lambda$ in Eq.~\eqref{h8}, we get
   \begin{align*}
      (a\ast_\lambda b)\circ_{\mu}c-a\ast_\lambda(b\circ_{\mu-\lambda} c)=(b\ast_{\mu-\lambda} a)\circ_{\mu}c-b\ast_{\mu-\lambda}(a\circ_\lambda c).
   \end{align*}
   Since $(b\ast_{-\partial-\lambda} a)\circ_{\mu}c=(b\ast_{\mu-\lambda} a)\circ_{\mu}c$, it follows further that
   $$(a\ast_\lambda b)\circ_{\mu}c-(b\ast_{\mu-\lambda} a)\circ_{\mu}c=a\ast_\lambda(b\circ_{\mu-\lambda} c)-b\ast_{\mu-\lambda}(a\circ_\lambda c)=[a_{\lambda}b]\circ_{\mu}c.$$
   Combining the above equalities, we arrive at
   \begin{align*}
      [a_{{\lambda}}(b\circ_{\mu-\lambda}c)]-[b_{\mu-\lambda}(a\circ_{\lambda}c)]=2[a_{\lambda}b]\circ_{\mu}c,
   \end{align*}
   it follows that relation \eqref{rule5} holds. Hence the conclusion holds.
\end{proof}

\begin{lemma}\label{Lem1.1}
   Let $(\mathcal{N} ,\circ_\lambda)$ be a commutative associative conformal algebra and $D$ be a derivation. Define a binary operation $\ast_\lambda$ on $\mathcal{N}$ by
   \begin{align}
      a\ast_\lambda b=a\circ_\lambda D(b), \quad \forall~a,b \in \mathcal{N}.
   \end{align}
Then the following conclusions hold:

(1) $(\mathcal{N} ,\ast_\lambda)$ is a Novikov conformal algebra.

(2) $(\mathcal{N} ,\circ_\lambda,\ast_\lambda)$ is a Novikov-Poisson conformal algebra.
\end{lemma}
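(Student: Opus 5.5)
Throughout, write $a\ast_\lambda b=a\circ_\lambda D(b)$. The only tools needed are: associativity $a\circ_\lambda(b\circ_\mu c)=(a\circ_\lambda b)\circ_{\lambda+\mu}c$; the left-commutativity of Remark~\ref{rem3}; commutativity $a\circ_\lambda b=b\circ_{-\partial-\lambda}a$; and the derivation property $D(a\circ_\lambda b)=D(a)\circ_\lambda b+a\circ_\lambda D(b)$ with $D\partial=\partial D$.

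First, $\ast_\lambda$ is a conformal algebra. Indeed, $(\partial a)\ast_\lambda b=(\partial a)\circ_\lambda D(b)=-\lambda\, a\ast_\lambda b$. Also, since $D\partial=\partial D$, we get $a\ast_\lambda\partial b=a\circ_\lambda \partial D(b)=(\partial+\lambda)\,a\ast_\lambda b$.

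\textbf{Part (1).} For left-symmetry, expand both associators:
\begin{align*}
(a\ast_\lambda b)\ast_{\lambda+\mu}c&=(a\circ_\lambda Db)\circ_{\lambda+\mu}Dc,\\
a\ast_\lambda(b\ast_\mu c)&=a\circ_\lambda D(b\circ_\mu Dc)=a\circ_\lambda(Db\circ_\mu Dc)+a\circ_\lambda(b\circ_\mu D^2c).
\end{align*}
By associativity, the first expression equals the first term of the second, so the associator is $-a\circ_\lambda(b\circ_\mu D^2c)$. By Remark~\ref{rem3} this is symmetric under $(a,\lambda)\leftrightarrow(b,\mu)$, which gives left-symmetry.

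For the Novikov identity, rewrite the left side using associativity:
\[
(a\circ_\lambda Db)\circ_{\lambda+\mu}Dc=a\circ_\lambda(Db\circ_\mu Dc).
\]
Similarly, the right side is $(a\ast_\lambda c)\ast_{-\mu-\partial}b=a\circ_\lambda(Dc\circ_{-\mu-\partial}Db)$. Here I must be careful about where the $\partial$ acts. The $-\partial$ in the subscript refers to $\partial$ acting on the whole product $(a\circ_\lambda Dc)\circ_{\nu}Db$ before substituting $\nu=-\mu-\partial$. Using the second identity of Remark~\ref{rem1}, $(a\circ_\lambda Dc)\circ_{-\partial-\mu}Db=a\circ_\lambda(Dc\circ_{-\partial-\mu}Db)$. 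Then commutativity gives $Dc\circ_{-\partial-\mu}Db=Db\circ_\mu Dc$, and the two sides match.

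\textbf{Part (2).} First, \eqref{f7}: $(a\circ_\lambda b)\ast_{\lambda+\mu}c=(a\circ_\lambda b)\circ_{\lambda+\mu}Dc=a\circ_\lambda(b\circ_\mu Dc)=a\circ_\lambda(b\ast_\mu c)$ by associativity.

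For \eqref{h8}, compute each side:
\begin{align*}
(a\ast_\lambda b)\circ_{\lambda+\mu}c&=a\circ_\lambda(Db\circ_\mu c),\\
a\ast_\lambda(b\circ_\mu c)&=a\circ_\lambda(Db\circ_\mu c)+a\circ_\lambda(b\circ_\mu Dc).
\end{align*}
So the left side of \eqref{h8} is $-a\circ_\lambda(b\circ_\mu Dc)$. By symmetry, the right side is $-b\circ_\mu(a\circ_\lambda Dc)$. These are equal by Remark~\ref{rem3}.

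The main obstacle is the Novikov identity. The substitution $-\mu-\partial$ must be handled correctly, specifically the convention that $\partial$ acts on the full product. I will justify it by expanding both sides in $n$-th products, or directly via Remark~\ref{rem1}, as sketched above.
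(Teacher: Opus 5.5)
Your proof is correct and follows the paper's argument essentially line by line: the same associator computations reduced to Remark~\ref{rem3} for left-symmetry and for \eqref{h8}, and associativity for \eqref{f7}. The only deviation is in the Novikov identity. There you pull out $a\circ_\lambda$ via the \emph{first} identity of Remark~\ref{rem1} (not the second, as you wrote) and then apply commutativity to the inner product, whereas the paper applies commutativity to the outer product and then left-commutativity from Remark~\ref{rem3}; both routes are valid.
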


\begin{proof}
(1) Let $a,b,c \in \mathcal{N}$. (a) We first show that $(\mathcal{N} ,\ast_\lambda)$ is a conformal algebra.
   \begin{align*}
      &(\partial a)\ast_\lambda b=(\partial a)\circ_\lambda D(b)=-\lambda (a\circ_\lambda D(b))=-\lambda(a\ast_\lambda b), \\
      &a\ast_\lambda (\partial b)=a\circ_\lambda D(\partial b)=a\circ_\lambda (\partial D(b))=(\partial+\lambda) (a\circ_\lambda D(b))=(\partial+\lambda)(a\ast_\lambda b).
   \end{align*}
(b) Next we show that $(\mathcal{N} ,\ast_\lambda)$ is a left-symmetric conformal algebra. Since
   \begin{align*}
      &(a\ast_\lambda b)\ast_{\lambda+\mu}c-a\ast_\lambda(b\ast_\mu c)=(a\circ_\lambda D(b))\circ_{\lambda+\mu} D(c)-a\circ_\lambda D(b\circ_\mu D(c))  \\
      &=a\circ_\lambda (D(b)\circ_{\mu} D(c))-a\circ_\lambda (D(b)\circ_\mu D(c))-a\circ_\lambda (b\circ_\mu D^2(c))=-a\circ_\lambda (b\circ_\mu D^2(c)),
   \end{align*}
we similarly have
   \begin{align*}
        (b\ast_\mu a)\ast_{\lambda+\mu}c-b\ast_\mu(a\ast_\lambda c)=-b\circ_\mu (a\circ_\lambda D^2(c)).
   \end{align*}
By Remark \ref{rem3}, we obtain $a\circ_\lambda (b\circ_\mu D^2(c))=b\circ_\mu (a\circ_\lambda D^2(c)).$ Thus, we get
$$(a\ast_\lambda b)\ast_{\lambda+\mu}c-a\ast_\lambda(b\ast_\mu c)=(b\ast_\mu a)\ast_{\lambda+\mu}c-b\ast_\mu(a\ast_\lambda c).$$
(c) Finally, we show that $(\mathcal{N} ,\ast_\lambda)$ is a Novikov conformal algebra. Since
   \begin{align*}
      (a\ast_\lambda c)\ast_{-\mu-\partial}b&=(a\circ_\lambda D(c))\circ_{-\mu-\partial}D(b)=D(b)\circ_{\mu}(a\circ_\lambda D(c)) \\
      &=a\circ_\lambda(D(b)\circ_{\mu} D(c))=(a\circ_\lambda D(b))\circ_{\lambda+\mu} D(c)=(a\ast_\lambda b)\ast_{\lambda+\mu}c.
   \end{align*}
Hence, $(\mathcal{N} ,\ast_\lambda)$ is a Novikov conformal algebra.

(2) Let $a,b,c \in \mathcal{N}$. (d) Relation \eqref{f7} holds. Since
   \begin{align*}
      (a\circ_\lambda b)\ast_{\lambda+\mu}c=(a\circ_\lambda b)\circ_{\lambda+\mu}D(c)=a\circ_\lambda(b\circ_{\mu}D(c))=a\circ_\lambda(b\ast_{\mu}c).
   \end{align*}
(e) Relation \eqref{h8} holds. Since
   \begin{align*}
      &(a\ast_\lambda b)\circ_{\lambda+\mu}c-a\ast_\lambda(b\circ_\mu c)=(a\circ_\lambda D(b))\circ_{\lambda+\mu}c-a\circ_\lambda D(b\circ_\mu c) \\
      &=a\circ_\lambda (D(b)\circ_{\mu}c)-a\circ_\lambda (D(b)\circ_{\mu}c)-a\circ_\lambda (b\circ_{\mu}D(c))=-a\circ_\lambda (b\circ_{\mu}D(c)),
   \end{align*}
we similarly have
 $$(b\ast_\mu a)\circ_{\lambda+\mu}c-b\ast_\mu(a\circ_\lambda c)=-b\circ_\mu (a\circ_{\lambda}D(c)).$$
By Remark \ref{rem3}, we get $a\circ_\lambda (b\circ_{\mu}D(c))=b\circ_\mu (a\circ_{\lambda}D(c))$. Thus, we obtain
   \begin{align*}
      (a\ast_\lambda b)\circ_{\lambda+\mu}c-a\ast_\lambda(b\circ_\mu c)=(b\ast_\mu a)\circ_{\lambda+\mu}c-b\ast_\mu(a\circ_\lambda c).
   \end{align*}
Altogether, $(\mathcal{N} ,\circ_\lambda,\ast_\lambda)$ is a Novikov-Poisson conformal algebra.
\end{proof}

Combining Theorem \ref{The1.2} and Lemma \ref{Lem1.1}, we obtain the following corollary.

\begin{corollary}
   Let $(\mathcal{N} ,\circ_\lambda)$ be a commutative associative conformal algebra and $D$ be a derivation. Then $(\mathcal{N} ,\circ_\lambda,[\cdot_{\lambda}\cdot])$ is a transposed Poisson conformal algebra, where
   \begin{align}
      [a_{\lambda}b]=a\circ_\lambda D(b)-b\circ_{-\partial-\lambda} D(a), \quad \forall ~a,b \in \mathcal{N}.
   \end{align}
\end{corollary}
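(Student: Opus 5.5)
The corollary follows by composing the two preceding results, so the plan is a direct chaining argument with one bookkeeping check.

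First, apply Lemma~\ref{Lem1.1} to $(\mathcal{N},\circ_\lambda)$ and the derivation $D$. Define $a\ast_\lambda b=a\circ_\lambda D(b)$. Part (1) of the lemma gives that $(\mathcal{N},\ast_\lambda)$ is a Novikov conformal algebra. Part (2) gives that $(\mathcal{N},\circ_\lambda,\ast_\lambda)$ is a Novikov-Poisson conformal algebra, i.e. the compatibilities \eqref{f7} and \eqref{h8} hold.

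Second, feed this Novikov-Poisson conformal algebra into Theorem~\ref{The1.2}. It asserts that the commutator bracket $[a_\lambda b]=a\ast_\lambda b-b\ast_{-\partial-\lambda}a$, together with $\circ_\lambda$, makes $\mathcal{N}$ a transposed Poisson conformal algebra. It then remains to identify this bracket with the one in the statement:
\begin{align*}
a\ast_\lambda b-b\ast_{-\partial-\lambda}a=a\circ_\lambda D(b)-b\circ_{-\partial-\lambda}D(a).
\end{align*}

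The only point needing care is the meaning of $b\ast_{-\partial-\lambda}a$. The substitution $\lambda\mapsto-\partial-\lambda$ is performed on the polynomial $b\ast_\lambda a=b\circ_\lambda D(a)\in\mathcal{N}[\lambda]$, with $\partial$ acting on the coefficients placed to the left. Since $\ast_\lambda$ is defined coefficientwise as $b\circ_\lambda D(a)$, we have $b_{(n)}^{\ast}a=b_{(n)}D(a)$ for every $n$, where $b_{(n)}^{\ast}a$ is the $n$-th product for $\ast_\lambda$ and $b_{(n)}D(a)$ is the $n$-th product for $\circ_\lambda$. Hence the two substituted expressions agree term by term.

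I expect no real obstacle: all the work is contained in Lemma~\ref{Lem1.1} and Theorem~\ref{The1.2}, and the only check is this notational identification of the substituted products.
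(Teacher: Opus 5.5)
Your proposal is correct and is exactly the paper's argument: the paper obtains the corollary by applying Lemma~\ref{Lem1.1} to get the Novikov-Poisson conformal algebra with $a\ast_\lambda b=a\circ_\lambda D(b)$ and then invoking Theorem~\ref{The1.2}. Your extra check that $b\ast_{-\partial-\lambda}a=b\circ_{-\partial-\lambda}D(a)$, which the paper leaves implicit, is also correct, since the $n$-th products of $\ast_\lambda$ and $\circ_\lambda$ agree coefficientwise.
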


\begin{definition}
   A {\bf pre-Lie commutative conformal algebra} $(\mathcal{L} ,\circ_\lambda,\ast_\lambda)$ is a $\mathbb{C}[\partial]$-module $\mathcal{L}$ endowed with a $\mathbb{C}$-bilinear map $\mathcal{L}  \otimes \mathcal{L}  \rightarrow \mathcal{L} [\lambda]$, denoted by $a\otimes b\mapsto a \circ_{\lambda}b$ and a $\mathbb{C}$-bilinear map $\mathcal{L}  \otimes \mathcal{L}  \rightarrow \mathcal{L} [\lambda]$,
   denoted by $a\otimes b\mapsto a \ast_\lambda b$ such that $(\mathcal{L} ,\circ_\lambda)$ is a {commutative associative conformal algebra}, $(\mathcal{L} ,\ast_\lambda)$ is a {left-symmetric conformal algebra} and the following equation holds for all $a,b,c \in \mathcal{L}$:
   \begin{align}\label{hh1}
      a\ast_\lambda(b\circ_\mu c)=(a\ast_\lambda b)\circ_{\lambda+\mu}c+b\circ_\mu (a\ast_\lambda c).
   \end{align}
\end{definition}

\begin{example}
   Let $(\mathcal{L} ,\circ_\lambda)$ be a commutative associative conformal algebra and $D$ be a derivation.
   It is straightforward to verify that $(\mathcal{L} ,\circ_\lambda,\ast_\lambda)$ is a pre-Lie commutative conformal algebra,
   where $(\mathcal{L} ,\ast_\lambda)$ is defined by $a\ast_\lambda b=a\circ_\lambda D(b)$ for all $a,b \in \mathcal{L}$.
\end{example}

A {\bf differential Novikov-Poisson conformal algebra} is defined as a triple $(\mathcal{N} ,\circ_\lambda,\ast_\lambda)$, where
$(\mathcal{N} ,\circ_\lambda)$ is a {commutative associative conformal algebra} and $(\mathcal{N} ,\ast_\lambda)$ is a {Novikov conformal algebra}
satisfying Eqs.~\eqref{f7}, \eqref{h8} and \eqref{hh1}.

\begin{definition}
   A {\textbf{pre-Lie Poisson conformal algebra}} $(\mathcal{L} ,\circ_\lambda,\ast_\lambda)$ is a $\mathbb{C}[\partial]$-module $\mathcal{L} $ endowed with a $\mathbb{C}$-bilinear map $\mathcal{L}  \otimes \mathcal{L}  \rightarrow \mathcal{L} [\lambda]$, denoted by $a\otimes b\mapsto a \circ_{\lambda}b$ and a $\mathbb{C}$-bilinear map $\mathcal{L}  \otimes \mathcal{L}  \rightarrow \mathcal{L} [\lambda]$,
   denoted by $a\otimes b\mapsto a \ast_\lambda b$ such that $(\mathcal{L} ,\circ_\lambda)$ is a {commutative associative conformal algebra}, $(\mathcal{L} ,\ast_\lambda)$ is a {left-symmetric conformal algebra} and the following equations hold for all $a,b,c \in \mathcal{L}$:
   \begin{align}
      &(a\circ_\lambda b)\ast_{\lambda+\mu}c=a\circ_\lambda(b\ast_\mu c),\\
      &(a\ast_\lambda b)\circ_{\lambda+\mu}c-a\ast_\lambda(b\circ_\mu c)=(b\ast_\mu a)\circ_{\lambda+\mu}c-b\ast_\mu(a\circ_\lambda c).
   \end{align}
\end{definition}

\begin{lemma}\label{Lem1.2}
The relationship among these notions can be described as follows:

(1) A Novikov-Poisson conformal algebra is a pre-Lie Poisson conformal algebra.

(2) A pre-Lie commutative conformal algebra $(\mathcal{L} ,\circ_\lambda,\ast_\lambda)$ satisfying Eq.~\eqref{f7} is a pre-Lie Poisson conformal algebra.

(3) A differential Novikov-Poisson conformal algebra is a pre-Lie commutative conformal algebra satisfying Eq.~\eqref{f7} and hence is pre-Lie Poisson conformal algebra.
\end{lemma}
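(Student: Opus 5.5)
Each part reduces to matching definitions, since a pre-Lie Poisson conformal algebra is a commutative associative conformal algebra $(\mathcal{L},\circ_\lambda)$ together with a left-symmetric conformal algebra $(\mathcal{L},\ast_\lambda)$ satisfying Eq.~\eqref{f7} and Eq.~\eqref{h8}.

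For (1), a Novikov conformal algebra is by definition a left-symmetric conformal algebra satisfying an extra identity. A Novikov-Poisson conformal algebra also satisfies Eqs.~\eqref{f7} and \eqref{h8} by definition. Forgetting the Novikov identity therefore leaves exactly the axioms of a pre-Lie Poisson conformal algebra, and nothing needs to be computed.

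For (2), let $(\mathcal{L},\circ_\lambda,\ast_\lambda)$ be pre-Lie commutative and satisfy Eq.~\eqref{f7}. It remains only to derive Eq.~\eqref{h8}. Apply Eq.~\eqref{hh1} to $a\ast_\lambda(b\circ_\mu c)$ to get
\[
(a\ast_\lambda b)\circ_{\lambda+\mu}c-a\ast_\lambda(b\circ_\mu c)=-\,b\circ_\mu(a\ast_\lambda c).
\]
Exchanging $(a,\lambda)\leftrightarrow(b,\mu)$ gives
\[
(b\ast_\mu a)\circ_{\lambda+\mu}c-b\ast_\mu(a\circ_\lambda c)=-\,a\circ_\lambda(b\ast_\mu c).
\]
So Eq.~\eqref{h8} is equivalent to the identity $b\circ_\mu(a\ast_\lambda c)=a\circ_\lambda(b\ast_\mu c)$. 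To prove it, use Eq.~\eqref{f7} to rewrite the right side as $(a\circ_\lambda b)\ast_{\lambda+\mu}c$. Then use commutativity $a\circ_\lambda b=b\circ_{-\partial-\lambda}a$. Inside an outer $\lambda+\mu$ product, sesquilinearity turns $-\partial$ into $\lambda+\mu$, so this becomes $(b\circ_\mu a)\ast_{\lambda+\mu}c$. This is the same substitution the paper uses repeatedly, e.g. $[(x\circ_\lambda y)_{\lambda+\mu}z]=[(y\circ_\mu x)_{\lambda+\mu}z]$ in the proof of Eq.~\eqref{eqh}. Applying Eq.~\eqref{f7} again gives $b\circ_\mu(a\ast_\lambda c)$, as required.

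The only delicate point is justifying the $-\partial\mapsto\lambda+\mu$ replacement for the mixed products. It follows from the sesquilinearity axioms of the conformal algebra $(\mathcal{L},\ast_\lambda)$: $(\partial x)\ast_\nu c=-\nu\, x\ast_\nu c$ with $\nu=\lambda+\mu$. Expanding $b\circ_{-\partial-\lambda}a$ in powers of $\partial$ and using this relation gives the claim.

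For (3), a differential Novikov-Poisson conformal algebra has $(\mathcal{N},\ast_\lambda)$ Novikov, hence left-symmetric. Together with Eq.~\eqref{hh1} it is a pre-Lie commutative conformal algebra, and it also satisfies Eq.~\eqref{f7}. Part (2) then shows it is a pre-Lie Poisson conformal algebra. This step also follows directly as in (1), since Eq.~\eqref{h8} is among its defining axioms.
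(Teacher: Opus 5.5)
Your proposal is correct and follows essentially the same route as the paper: for (2) you use Eq.~\eqref{hh1} to reduce Eq.~\eqref{h8} to $b\circ_\mu(a\ast_\lambda c)=a\circ_\lambda(b\ast_\mu c)$, then close it with Eq.~\eqref{f7}, commutativity, and the sesquilinear substitution $(a\circ_\lambda b)\ast_{\lambda+\mu}c=(b\circ_\mu a)\ast_{\lambda+\mu}c$. Parts (1) and (3) are the direct definition-matching the paper calls straightforward.
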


\begin{proof}
    The proofs of (1) and (3) are straightforward. It suffices to prove (2). Let $a,b,c \in \mathcal{L}$. By Eqs.~\eqref{f7} and \eqref{hh1}, we have
   \begin{align*}
        &(a\ast_\lambda b)\circ_{\lambda+\mu}c-a\ast_\lambda(b\circ_\mu c)=-b\circ_\mu (a\ast_\lambda c)=-(b\circ_{\mu} a)\ast_{\lambda+\mu}c,\\
        &(b\ast_\mu a)\circ_{\lambda+\mu}c-b\ast_\mu(a\circ_\lambda c)=-a\circ_\lambda (b\ast_\mu c)=-(a\circ_\lambda b)\ast_{\lambda+\mu}c.
   \end{align*}
Observe that
   \begin{align*}
        (a\circ_\lambda b)\ast_{\lambda+\mu}c=(b\circ_{-\partial-\lambda} a)\ast_{\lambda+\mu}c=(b\circ_{\mu} a)\ast_{\lambda+\mu}c.
   \end{align*}
Thus, we obtain
   \begin{align*}
       (a\ast_\lambda b)\circ_{\lambda+\mu}c-a\ast_\lambda(b\circ_\mu c)=(b\ast_\mu a)\circ_{\lambda+\mu}c-b\ast_\mu(a\circ_\lambda c).
   \end{align*}
Hence, $(\mathcal{L} ,\circ_\lambda,\ast_\lambda)$ is a pre-Lie Poisson conformal algebra.
\end{proof}

The commutator of a left-symmetric conformal algebra is a Lie conformal algebra (Proposition \ref{propo1}).
By applying a similar argument as in the proof of Theorem \ref{The1.2}, we obtain the following proposition.

\begin{proposition}\label{Pro2}
   Let $(\mathcal{L} ,\circ_\lambda,\ast_\lambda)$ be a pre-Lie Poisson conformal algebra. Define
   \begin{align}
      [a_{\lambda}b]=a\ast_{\lambda} b-b\ast_{-\partial-\lambda} a, \quad \forall ~a,b \in \mathcal{L}.
   \end{align}
Then $(\mathcal{L} ,\circ_\lambda,[\cdot_\lambda\cdot])$ is a transposed Poisson conformal algebra.
\end{proposition}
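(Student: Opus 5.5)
The plan mirrors the proof of Theorem \ref{The1.2}, checking which hypotheses were really used there. Since $(\mathcal{L},\ast_\lambda)$ is left-symmetric, Proposition \ref{propo1} shows that the commutator $[a_\lambda b]=a\ast_\lambda b-b\ast_{-\partial-\lambda}a$ makes $\mathcal{L}$ a Lie conformal algebra. By hypothesis $(\mathcal{L},\circ_\lambda)$ is commutative associative. So the only thing to prove is the transposed conformal Leibniz rule in the equivalent form \eqref{rule5}:
\begin{align*}
2[a_\lambda b]\circ_\mu c=[a_\lambda(b\circ_{\mu-\lambda}c)]-[b_{\mu-\lambda}(a\circ_\lambda c)].
\end{align*}

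First, expand both brackets on the right by the commutator definition. This gives
\begin{align*}
&a\ast_\lambda(b\circ_{\mu-\lambda}c)-(b\circ_{\mu-\lambda}c)\ast_{-\partial-\lambda}a\\
&\quad-b\ast_{\mu-\lambda}(a\circ_\lambda c)+(a\circ_\lambda c)\ast_{-\partial-\mu+\lambda}b.
\end{align*}

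Second, convert the two terms with $\circ$ on the inside of $\ast$. Use commutativity of $\circ_\lambda$ together with the identities of Remark \ref{rem2}, which follow from \eqref{f7} and sesquilinearity exactly as Remark \ref{rem1} follows from associativity. This yields
\begin{align*}
(a\circ_\lambda c)\ast_{-\partial-\mu+\lambda}b&=(a\ast_\lambda b)\circ_\mu c,\\
(b\circ_{\mu-\lambda}c)\ast_{-\partial-\lambda}a&=(b\ast_{-\partial-\lambda}a)\circ_\mu c.
\end{align*}
Their difference is precisely $[a_\lambda b]\circ_\mu c$, so the right-hand side becomes
\begin{align*}
a\ast_\lambda(b\circ_{\mu-\lambda}c)-b\ast_{\mu-\lambda}(a\circ_\lambda c)+[a_\lambda b]\circ_\mu c.
\end{align*}

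Third, apply the second defining identity of a pre-Lie Poisson conformal algebra, the analogue of \eqref{h8}, with $\mu$ replaced by $\mu-\lambda$. Combined with $(b\ast_{-\partial-\lambda}a)\circ_\mu c=(b\ast_{\mu-\lambda}a)\circ_\mu c$, which holds because $\partial$ acting on the left factor of $\circ_\mu$ becomes $-\mu$, it gives
\begin{align*}
a\ast_\lambda(b\circ_{\mu-\lambda}c)-b\ast_{\mu-\lambda}(a\circ_\lambda c)=[a_\lambda b]\circ_\mu c.
\end{align*}
Substituting this into the previous step gives $2[a_\lambda b]\circ_\mu c$, which is \eqref{rule5}.

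The main obstacle is the bookkeeping in the second step, namely justifying the analogue of Remark \ref{rem2} in the pre-Lie Poisson setting. The key check is that the derivation of those identities uses only \eqref{f7}, commutativity of $\circ_\lambda$ and conformal sesquilinearity, and never the Novikov identity. I would verify this by substituting $\lambda\mapsto-\partial-\lambda$-type arguments into \eqref{f7}, being careful that $\partial$ acts on the whole expression. Once that is confirmed, Theorem \ref{The1.2}'s computation transfers verbatim.
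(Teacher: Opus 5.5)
Your proposal is correct and is essentially the paper's argument. The paper proves this proposition only by saying that the proof of Theorem~\ref{The1.2} carries over, and your write-up is exactly that computation: expand the commutators, convert the two outer $\ast$-terms via Remark~\ref{rem2}, then apply the second pre-Lie Poisson axiom with $\mu\mapsto\mu-\lambda$. Your added check is what justifies the paper's ``similar argument'' claim: the identities of Remark~\ref{rem2} use only \eqref{f7}, commutativity of $\circ_\lambda$ and sesquilinearity, and never the Novikov identity.
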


Combining Proposition \ref{Pro2} and Lemma \ref{Lem1.2}, we obtain the following corollary.
\begin{corollary}
   Let $(\mathcal{L} ,\circ_\lambda,\ast_\lambda)$ be a pre-Lie commutative conformal algebra. Suppose that Eq.~\eqref{f7} holds. Then $(\mathcal{L} ,\circ_\lambda,[\cdot_\lambda\cdot])$ is a transposed Poisson conformal algebra, where
   the binary operation $[\cdot_\lambda\cdot]$ is defined by $[a_{\lambda}b]=a\ast_{\lambda} b-b\ast_{-\partial-\lambda} a$ for all $a,b \in \mathcal{L}$. In particular, the conclusion holds when $(\mathcal{L} ,\circ_\lambda,\ast_\lambda)$ is a differential Novikov-Poisson conformal algebra.
\end{corollary}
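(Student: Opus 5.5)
The corollary follows by chaining the two results just established. Let $(\mathcal{L},\circ_\lambda,\ast_\lambda)$ be a pre-Lie commutative conformal algebra satisfying Eq.~\eqref{f7}.

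First, I apply Lemma~\ref{Lem1.2}(2). It says that $(\mathcal{L},\circ_\lambda,\ast_\lambda)$ is a pre-Lie Poisson conformal algebra. Concretely, Eq.~\eqref{f7} holds by assumption. The second compatibility identity (the one shaped like Eq.~\eqref{h8}) is exactly what the lemma derives from Eqs.~\eqref{f7} and \eqref{hh1}, using commutativity in the form $(a\circ_\lambda b)\ast_{\lambda+\mu}c=(b\circ_\mu a)\ast_{\lambda+\mu}c$.

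Second, I apply Proposition~\ref{Pro2} to this pre-Lie Poisson conformal algebra. It gives that $(\mathcal{L},\circ_\lambda,[\cdot_\lambda\cdot])$, with $[a_\lambda b]=a\ast_\lambda b-b\ast_{-\partial-\lambda}a$, is a transposed Poisson conformal algebra.

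Two routine points can be spelled out if needed, following the proof of Theorem~\ref{The1.2} verbatim:
\begin{itemize}
\item The bracket is a Lie conformal algebra because it is the commutator of the left-symmetric product $\ast_\lambda$ (Proposition~\ref{propo1}).
\item Relation \eqref{rule5} reduces to the Eq.~\eqref{h8}-type identity evaluated at $\mu\mapsto\mu-\lambda$, combined with the rewritings $(a\ast_\lambda b)\circ_\mu c=(a\circ_\lambda c)\ast_{-\partial-\mu+\lambda}b$ and $(b\ast_{-\partial-\lambda}a)\circ_\mu c=(b\circ_{\mu-\lambda}c)\ast_{-\partial-\lambda}a$.
\end{itemize}

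The only point that needs care is the second bullet: those two rewritings were justified in Theorem~\ref{The1.2} through Remark~\ref{rem2}. That remark rests only on commutativity of $\circ_\lambda$ and on Eq.~\eqref{f7}, and both are available here, so the argument goes through unchanged.

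For the final sentence of the corollary, I invoke Lemma~\ref{Lem1.2}(3). A differential Novikov-Poisson conformal algebra satisfies Eq.~\eqref{hh1}, and its $\ast_\lambda$ is Novikov, hence left-symmetric. So it is a pre-Lie commutative conformal algebra, and by definition it also satisfies Eq.~\eqref{f7}. The first part of the corollary then applies directly.
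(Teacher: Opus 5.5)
Your proposal is correct and follows the paper's route exactly: Lemma~\ref{Lem1.2}(2) (and (3) for the differential Novikov--Poisson case) shows you have a pre-Lie Poisson conformal algebra, and Proposition~\ref{Pro2} then gives the transposed Poisson conformal algebra. You also check that the rewritings borrowed from the proof of Theorem~\ref{The1.2} use only commutativity of $\circ_\lambda$ and Eq.~\eqref{f7}, not the Novikov identity; this is a point the paper leaves implicit.
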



\section{Compatible noncommutative TPCA structures on the Lie conformal algebra $W(a,b)$}\label{sec5}
We begin this section with the following definition.
\begin{definition}
Let $(A,[{\cdot_ \lambda \cdot}])$ be a {Lie conformal algebra}. A (noncommutative) TPCA structure over $A$ is a binary $\lambda$-multiplication $\circ_{\lambda}$ on $A$, such that $(A,\circ_{\lambda},[{\cdot_ \lambda \cdot}])$ forms a (noncommutative) TPCA.
\end{definition}

\begin{proposition}
The compatible (noncommutative) TPCA structures over the Virasoro Lie conformal algebra $Vir$ are of the form:
   \begin{align}
        Vir=\mathbb{C}[\partial]L,\quad L\circ_\lambda L=cL,
   \end{align}
where $c$ is a complex number. 
\end{proposition}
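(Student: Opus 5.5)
Since $Vir$ is free of rank one over $\mathbb{C}[\partial]$, sesquilinearity forces any $\lambda$-multiplication to have the form $L\circ_\lambda L=f(\partial,\lambda)L$ for a polynomial $f\in\mathbb{C}[\partial,\lambda]$. The plan is to determine which $f$ satisfy associativity \eqref{ly} together with the transposed conformal Leibniz rule \eqref{rule3}, with $[L_\lambda L]=(\partial+2\lambda)L$. We do not impose commutativity, since the statement covers the noncommutative case as well. The \emph{if} direction is immediate. For $f=c$ constant, both sides of \eqref{rule3} for $a=b=c=L$ equal $2c(\partial+2\lambda+2\mu)L$ up to the shifts, and associativity reads $c^2L=c^2L$. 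So the real work is the \emph{only if} direction.

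First write out \eqref{rule3} with $a=b=c=L$. The left side is
$$2L\circ_\lambda\bigl((\partial+2\mu)L\bigr)=2(\partial+\lambda+2\mu)f(\partial,\lambda)L.$$
On the right, $[(f(\partial,\lambda)L)_{\lambda+\mu}L]=f(-\lambda-\mu,\lambda)(\partial+2\lambda+2\mu)L$. The other term is $[L_\mu (f(\partial,\lambda)L)]=f(\partial+\mu,\lambda)(\partial+2\mu)L$. This gives the polynomial identity
$$2(\partial+\lambda+2\mu)f(\partial,\lambda)=(\partial+2\lambda+2\mu)f(-\lambda-\mu,\lambda)+(\partial+2\mu)f(\partial+\mu,\lambda).$$

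Next, compare degrees in $\partial$. Let $n=\deg_\partial f$ with leading coefficient $g(\lambda)$. If $n\ge1$, the $\partial^{n+1}$ coefficients give $2g=g$ when $n\ge1$ (the middle term has degree $1$ in $\partial$), so $g=0$, a contradiction. Hence $f=f(\lambda)$ depends only on $\lambda$. Substituting back gives
$$2(\partial+\lambda+2\mu)f(\lambda)=(\partial+2\lambda+2\mu)f(\lambda)+(\partial+2\mu)f(\lambda),$$
which holds identically. So the Leibniz rule alone leaves $f(\lambda)$ arbitrary.

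The main obstacle is to cut $f(\lambda)$ down to a constant, and this is where associativity enters. It reads $L\circ_\lambda(f(\mu)L)=f(\mu)f(\lambda)L$ on the left and $(f(\lambda)L)\circ_{\lambda+\mu}L=f(\lambda)f(\lambda+\mu)L$ on the right. So $f(\lambda)\bigl(f(\mu)-f(\lambda+\mu)\bigr)=0$. If $f\neq0$, then $f(\mu)=f(\lambda+\mu)$ as polynomials, so $f$ is a constant $c$. If $f=0$, we take $c=0$.

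Finally, note that $f=c$ automatically satisfies $L\circ_\lambda L=L\circ_{-\partial-\lambda}L$. So every such structure is in fact commutative, and the classification is $L\circ_\lambda L=cL$ as stated.
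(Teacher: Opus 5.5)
Your proof is correct. It differs from the paper's only in which identity removes the $\partial$-dependence of $f$.

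The paper does everything with associativity. It compares $\partial$-degrees in
\[
f(\partial+\lambda,\mu)f(\partial,\lambda)=f(-\lambda-\mu,\lambda)f(\partial,\lambda+\mu),
\]
which gives $2n=n$. Hence $f=f(\lambda)$, and then $f(\mu)f(\lambda)=f(\lambda)f(\lambda+\mu)$ forces $f$ to be constant. The transposed Leibniz rule \eqref{ly2} is checked only at the end.

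You instead use \eqref{ly2} to remove the $\partial$-dependence, via the leading-coefficient comparison $2g=g$. You then use associativity only for the $\lambda$-dependence.

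What each route buys:
\begin{itemize}
\item The paper's argument proves something stronger: every associative conformal product on a free rank-one $\mathbb{C}[\partial]$-module is $L\circ_\lambda L=cL$, whatever the bracket is. This is why it can be reused for $q_2$ in Lemma~\ref{lems1}.
\item Your argument shows that the Leibniz rule with the Virasoro bracket forces $\partial$-independence by itself but leaves $f(\lambda)$ completely free. So associativity is exactly what pins $f$ to a constant. You also carry out explicitly the verification of \eqref{ly2} that the paper calls easy.
\item Your treatment of \eqref{ly2} is close in spirit to how the paper later handles \eqref{yyy24}, by setting $\mu=0$.
\end{itemize}

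One minor slip: in the ``if'' direction, both sides of \eqref{rule3} with $a=b=c=L$ equal $2c(\partial+\lambda+2\mu)L$, not $2c(\partial+2\lambda+2\mu)L$.
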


\begin{proof}
Eqs.~\eqref{ly} and \eqref{rule3} give the following relations:
   \begin{align}
      &L\circ_{\lambda}\left(L\circ_{\mu} L\right)=\left(L\circ_{\lambda} L\right)\circ_{\lambda+\mu} L,\label{ly1}\\
      &{2} (L\circ_{\lambda}[L_{\mu}L])=[(L\circ_{\lambda}L)_{\lambda+\mu}L]+[L_{\mu}(L\circ_{\lambda}L)].\label{ly2}
   \end{align}
Let us set $L\circ_\lambda L=f(\partial, \lambda)L$ for some polynomial $f$. Then Eq.~\eqref{ly1} is equivalent to
    \begin{align}\label{ly3}
        f(\partial+\lambda, \mu) f(\partial, \lambda)=f(-\lambda-\mu, \lambda) f(\partial, \lambda+\mu),
    \end{align}
which implies $deg_{\partial} f(\partial, \lambda)+deg_{\partial} f(\partial, \lambda)=deg_{\partial} f(\partial, \lambda)$,
where we use the notation $deg_{\partial} f(\partial, \lambda)$ to denote the highest degree of $\partial$ in $f(\partial, \lambda)$. Hence, $deg_{\partial} f(\partial, \lambda)=0$.
Now we can suppose $f(\partial, \lambda)=f(\lambda)$ for some $f(\lambda)\in \mathbb{C}[\lambda]$. Then Eq.~\eqref{ly3} reduces to 
    \begin{align}
        f(\mu) f(\lambda)=f(\lambda) f(\lambda+\mu),
    \end{align}
which implies $f(\lambda)=c$ for some $c\in \mathbb{C}$. Moreover, it is easy to check that Eq.~\eqref{ly2} holds and $L\circ_\lambda L=L\circ_{-\partial-\lambda} L$. Hence the conclusion holds.
\end{proof}

Next we turn to the classification of compatible noncommutative TPCA structures over the Lie conformal algebra $W(a,b)$.
By the definition of noncommutative TPCA, we can assume
\begin{alignat}{2}
L\circ_\lambda L&=f_1(\partial, \lambda)L+f_2(\partial, \lambda)M,&\quad L\circ_\lambda M&=g_1(\partial, \lambda)L+g_2(\partial, \lambda)M,\label{ly4}\\
M\circ_\lambda L&=h_1(\partial, \lambda)L+h_2(\partial, \lambda)M,&\quad M\circ_\lambda M&=q_1(\partial, \lambda)L+q_2(\partial, \lambda)M,\label{ly5}
\end{alignat}
where $f_1(\partial, \lambda)$, $f_2(\partial, \lambda)$, $g_1(\partial, \lambda)$, $g_2(\partial, \lambda)$,
$h_1(\partial, \lambda)$, $h_2(\partial, \lambda)$, $q_1(\partial, \lambda)$, and $q_2(\partial, \lambda)$ are polynomials in $\mathbb{C}[\partial,\lambda]$.

By the associative conformal identity \eqref{ly} and the transposed conformal Leibniz rule \eqref{rule3}, the algebra defined by Eqs.~\eqref{ly4} and \eqref{ly5} is a compatible noncommutative TPCA structure on $W(a,b)$ if and only if the following identities hold:
\begin{alignat*}{2}
    &L\circ_\lambda (L\circ_\mu L) = (L\circ_{\lambda} L) \circ_{\lambda+\mu} L, &\quad
    &2L\circ_{\lambda}[L_{\mu}L] = [(L\circ_{\lambda}L)_{\lambda+\mu}L] + [L_{\mu}(L\circ_{\lambda}L)], \\
    &L\circ_\lambda (L\circ_\mu M) = (L\circ_{\lambda} L) \circ_{\lambda+\mu} M, &
    &2L\circ_{\lambda}[L_{\mu}M] = [(L\circ_{\lambda}L)_{\lambda+\mu}M] + [L_{\mu}(L\circ_{\lambda}M)], \\
    &L\circ_\lambda (M\circ_\mu L) = (L\circ_{\lambda} M) \circ_{\lambda+\mu} L, &
    &2L\circ_{\lambda}[M_{\mu}L] = [(L\circ_{\lambda}M)_{\lambda+\mu}L] + [M_{\mu}(L\circ_{\lambda}L)], \\
    &L\circ_\lambda (M\circ_\mu M) = (L\circ_{\lambda} M) \circ_{\lambda+\mu} M, &
    &2L\circ_{\lambda}[M_{\mu}M] = [(L\circ_{\lambda}M)_{\lambda+\mu}M] + [M_{\mu}(L\circ_{\lambda}M)], \\
    &M\circ_\lambda (L\circ_\mu L) = (M\circ_{\lambda} L) \circ_{\lambda+\mu} L, &
    &2M\circ_{\lambda}[L_{\mu}L] = [(M\circ_{\lambda}L)_{\lambda+\mu}L] + [L_{\mu}(M\circ_{\lambda}L)], \\
    &M\circ_\lambda (L\circ_\mu M) = (M\circ_{\lambda} L) \circ_{\lambda+\mu} M, &
    &2M\circ_{\lambda}[L_{\mu}M] = [(M\circ_{\lambda}L)_{\lambda+\mu}M] + [L_{\mu}(M\circ_{\lambda}M)], \\
    &M\circ_\lambda (M\circ_\mu L) = (M\circ_{\lambda} M) \circ_{\lambda+\mu} L, &
    &2M\circ_{\lambda}[M_{\mu}L] = [(M\circ_{\lambda}M)_{\lambda+\mu}L] + [M_{\mu}(M\circ_{\lambda}L)], \\
    &M\circ_\lambda (M\circ_\mu M) = (M\circ_{\lambda} M) \circ_{\lambda+\mu} M, &
    &2M\circ_{\lambda}[M_{\mu}M] = [(M\circ_{\lambda}M)_{\lambda+\mu}M] + [M_{\mu}(M\circ_{\lambda}M)].
\end{alignat*}

Substituting Eqs.~\eqref{ly4} and \eqref{ly5} into these identities above, the algebra defined by Eqs.~\eqref{ly4} and \eqref{ly5} is a compatible noncommutative TPCA structure on $W(a,b)$ if and only if
$f_1(\partial, \lambda)$, $f_2(\partial, \lambda)$, $g_1(\partial, \lambda)$, $g_2(\partial, \lambda)$,
$h_1(\partial, \lambda)$, $h_2(\partial, \lambda)$, $q_1(\partial, \lambda)$, and $q_2(\partial, \lambda)$ satisfy the following equations:
\begin{align}
f_1(\partial+\lambda, \mu)f_1(\partial, \lambda)+f_2(\partial&+\lambda, \mu)g_1(\partial, \lambda) \nonumber\\
&=f_1(-\lambda-\mu, \lambda)f_1(\partial, \lambda+\mu)+f_2(-\lambda-\mu, \lambda)h_1(\partial, \lambda+\mu),\label{yyy8}\\
f_1(\partial+\lambda, \mu)f_2(\partial, \lambda)+f_2(\partial&+\lambda, \mu)g_2(\partial, \lambda) \nonumber\\
&=f_1(-\lambda-\mu, \lambda)f_2(\partial, \lambda+\mu)+f_2(-\lambda-\mu, \lambda)h_2(\partial, \lambda+\mu),\label{yyy9}\\
g_1(\partial+\lambda, \mu)f_1(\partial, \lambda)+g_2(\partial&+\lambda, \mu)g_1(\partial, \lambda)\nonumber\\
&=f_1(-\lambda-\mu, \lambda)g_1(\partial, \lambda+\mu)+f_2(-\lambda-\mu, \lambda)q_1(\partial, \lambda+\mu),\label{yyy10}\\
g_1(\partial+\lambda, \mu)f_2(\partial, \lambda)+g_2(\partial&+\lambda, \mu)g_2(\partial, \lambda)\nonumber\\
&=f_1(-\lambda-\mu, \lambda)g_2(\partial, \lambda+\mu)+f_2(-\lambda-\mu, \lambda)q_2(\partial, \lambda+\mu),\label{yyy11}\\
h_1(\partial+\lambda, \mu)f_1(\partial, \lambda)+h_2(\partial&+\lambda, \mu)g_1(\partial, \lambda)\nonumber\\
&=g_1(-\lambda-\mu, \lambda)f_1(\partial, \lambda+\mu)+g_2(-\lambda-\mu, \lambda)h_1(\partial, \lambda+\mu),\label{yyy12}\\
h_1(\partial+\lambda, \mu)f_2(\partial, \lambda)+h_2(\partial&+\lambda, \mu)g_2(\partial, \lambda)\nonumber\\
&=g_1(-\lambda-\mu, \lambda)f_2(\partial, \lambda+\mu)+g_2(-\lambda-\mu, \lambda)h_2(\partial, \lambda+\mu),\label{yyy13}\\
q_1(\partial+\lambda, \mu)f_1(\partial, \lambda)+q_2(\partial&+\lambda, \mu)g_1(\partial, \lambda)\nonumber\\
&=g_1(-\lambda-\mu, \lambda)g_1(\partial, \lambda+\mu)+g_2(-\lambda-\mu, \lambda)q_1(\partial, \lambda+\mu),\label{yyy14}\\
q_1(\partial+\lambda, \mu)f_2(\partial, \lambda)+q_2(\partial&+\lambda, \mu)g_2(\partial, \lambda)\nonumber\\
&=g_1(-\lambda-\mu, \lambda)g_2(\partial, \lambda+\mu)+g_2(-\lambda-\mu, \lambda)q_2(\partial, \lambda+\mu),\label{yyy15}\\
f_1(\partial+\lambda, \mu)h_1(\partial, \lambda)+f_2(\partial&+\lambda, \mu)q_1(\partial, \lambda)\nonumber\\
&=h_1(-\lambda-\mu, \lambda)f_1(\partial, \lambda+\mu)+h_2(-\lambda-\mu, \lambda)h_1(\partial, \lambda+\mu),\label{yyy16}\\
f_1(\partial+\lambda, \mu)h_2(\partial, \lambda)+f_2(\partial&+\lambda, \mu)q_2(\partial, \lambda)\nonumber\\
&=h_1(-\lambda-\mu, \lambda)f_2(\partial, \lambda+\mu)+h_2(-\lambda-\mu, \lambda)h_2(\partial, \lambda+\mu),\label{yyy17}\\
g_1(\partial+\lambda, \mu)h_1(\partial, \lambda)+g_2(\partial&+\lambda, \mu)q_1(\partial, \lambda)\nonumber\\
&=h_1(-\lambda-\mu, \lambda)g_1(\partial, \lambda+\mu)+h_2(-\lambda-\mu, \lambda)q_1(\partial, \lambda+\mu),\label{yyy18}\\
g_1(\partial+\lambda, \mu)h_2(\partial, \lambda)+g_2(\partial&+\lambda, \mu)q_2(\partial, \lambda)\nonumber\\
&=h_1(-\lambda-\mu, \lambda)g_2(\partial, \lambda+\mu)+h_2(-\lambda-\mu, \lambda)q_2(\partial, \lambda+\mu),\label{yyy19}\\
h_1(\partial+\lambda, \mu)h_1(\partial, \lambda)+h_2(\partial&+\lambda, \mu)q_1(\partial, \lambda)\nonumber\\
&=q_1(-\lambda-\mu, \lambda)f_1(\partial, \lambda+\mu)+q_2(-\lambda-\mu, \lambda)h_1(\partial, \lambda+\mu),\label{yyy20}\\
h_1(\partial+\lambda, \mu)h_2(\partial, \lambda)+h_2(\partial&+\lambda, \mu)q_2(\partial, \lambda)\nonumber\\
&=q_1(-\lambda-\mu, \lambda)f_2(\partial, \lambda+\mu)+q_2(-\lambda-\mu, \lambda)h_2(\partial, \lambda+\mu),\label{yyy21}\\
q_1(\partial+\lambda, \mu)h_1(\partial, \lambda)+q_2(\partial&+\lambda, \mu)q_1(\partial, \lambda)\nonumber\\
&=q_1(-\lambda-\mu, \lambda)g_1(\partial, \lambda+\mu)+q_2(-\lambda-\mu, \lambda)q_1(\partial, \lambda+\mu),\label{yyy22}\\
q_1(\partial+\lambda, \mu)h_2(\partial, \lambda)+q_2(\partial&+\lambda, \mu)q_2(\partial, \lambda)\nonumber\\
&=q_1(-\lambda-\mu, \lambda)g_2(\partial, \lambda+\mu)+q_2(-\lambda-\mu, \lambda)q_2(\partial, \lambda+\mu),\label{yyy23}\\
2(\partial+\lambda+2\mu)f_1(\partial, \lambda)&=(\partial+2\lambda+2\mu)f_1(-\lambda-\mu, \lambda)+(\partial+2\mu)f_1(\partial+\mu, \lambda),\label{yyy24}\\
2(\partial+\lambda+2\mu)f_2(\partial, \lambda)&=((a-1)\partial+a{(\lambda+\mu)}-b)f_2(-\lambda-\mu, \lambda)\nonumber\\
&\hphantom{=((a-1)\partial+a{(\lambda+\mu)}-b)} +(\partial+a\mu+b)f_2(\partial+\mu, \lambda),\label{yyy25}\\
2(\partial+\lambda+a\mu+b)g_1(\partial, \lambda)&=(\partial+2\mu)g_1(\partial+\mu, \lambda),\label{yyy26}\\
2(\partial+\lambda+a\mu+b)g_2(\partial, \lambda)&=(\partial+a(\lambda+\mu)+b)f_1(-\lambda-\mu, \lambda)+(\partial+a\mu+b)g_2(\partial+\mu, \lambda),\label{yyy27}\\
2((a-1)(\partial+\lambda)+a{\mu}-b)&g_1(\partial, \lambda)=(\partial+2\lambda+2\mu)g_1(-\lambda-\mu, \lambda),\label{yyy28}\\
2((a-1)(\partial+\lambda)+a{\mu}-b)&g_2(\partial, \lambda)=((a-1)\partial+a{(\lambda+\mu)}-b)g_2(-\lambda-\mu, \lambda)\nonumber\\
&\hphantom{g_2(\partial, \lambda)=((a-1)\partial}+((a-1)\partial+a{\mu}-b)f_1(\partial+\mu, \lambda),\label{yyy29}\\
(\partial+a(\lambda+\mu)+b)&g_1(-\lambda-\mu, \lambda)+((a-1)\partial+a{\mu}-b)g_1(\partial+\mu, \lambda)=0,\label{yyy30}\\
2(\partial+\lambda+2\mu)h_1(\partial, \lambda)&=(\partial+2\lambda+2\mu)h_1(-\lambda-\mu, \lambda)+(\partial+2\mu)h_1(\partial+\mu, \lambda),\label{yyy31}\\
2(\partial+\lambda+2\mu)h_2(\partial, \lambda)&=((a-1)\partial+a{(\lambda+\mu)}-b)h_2(-\lambda-\mu, \lambda)\nonumber\\
&\hphantom{=((a-1)\partial+a{(\lambda+\mu)}-b)} +(\partial+a\mu+b)h_2(\partial+\mu, \lambda),\label{yyy32}\\
2(\partial+\lambda+a\mu+b)q_1(\partial, \lambda)&=(\partial+2\mu)q_1(\partial+\mu, \lambda),\label{yyy33}\\
2(\partial+\lambda+a\mu+b)q_2(\partial, \lambda)&=(\partial+a(\lambda+\mu)+b)h_1(-\lambda-\mu, \lambda)+(\partial+a\mu+b)q_2(\partial+\mu, \lambda),\label{yyy34}\\
2((a-1)(\partial+\lambda)+a{\mu}-b)&q_1(\partial, \lambda)=(\partial+2\lambda+2\mu)q_1(-\lambda-\mu, \lambda),\label{yyy35}\\
2((a-1)(\partial+\lambda)+a{\mu}-b)&q_2(\partial, \lambda)=((a-1)\partial+a{(\lambda+\mu)}-b)q_2(-\lambda-\mu, \lambda)\nonumber\\
&\hphantom{q_2(\partial, \lambda)=((a-1)\partial} +((a-1)\partial+a{\mu}-b)h_1(\partial+\mu, \lambda),\label{yyy36}\\
(\partial+a(\lambda+\mu)+b)&q_1(-\lambda-\mu, \lambda)+((a-1)\partial+a{\mu}-b)q_1(\partial+\mu, \lambda)=0.\label{yyy37}
\end{align}

\begin{lemma}\label{lems1}
Depending on whether or not $a=2$, we have the following two cases:

(A1) If $a\neq2$, then
\begin{align*}
f_1(\partial, \lambda)=f_2(\partial, \lambda)=g_1(\partial, \lambda)=g_2(\partial, \lambda)=h_1(\partial, \lambda)=h_2(\partial, \lambda)=q_1(\partial, \lambda)=q_2(\partial, \lambda)=0;
\end{align*}

(A2) If $a=2$, then
\begin{align*}
&g_1(\partial, \lambda)=q_1(\partial, \lambda)=0,\quad
h_1(\partial, \lambda)=q_2(\partial, \lambda)=c_0,\\
&f_1(\partial, \lambda)=g_2(\partial, \lambda)=p(\lambda),\quad
f_2(\partial, \lambda)=s(\lambda),\quad
h_2(\partial, \lambda)=l(\lambda),
\end{align*}
where $p(\lambda),s(\lambda),l(\lambda)\in\mathbb{C}[\lambda]$ and $c_0\in\mathbb{C}$.
\end{lemma}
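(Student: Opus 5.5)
The plan is to use only the fourteen linear equations \eqref{yyy24}--\eqref{yyy37} coming from the transposed conformal Leibniz rule to pin down the shape of all eight polynomials. From the associativity equations \eqref{yyy8}--\eqref{yyy23} I will use only \eqref{yyy20}, at the very end, to show that $h_1=q_2$ is constant. The basic tool is a degree argument in $\partial$. Write a polynomial as $\phi(\partial,\lambda)=\sum_{k=0}^{n}\phi_k(\lambda)\partial^k$ with $\phi_n\neq0$. A term $(\partial+2\mu)\phi(\partial+\mu,\lambda)$ then carries $\mu^{n+1}$ with coefficient $(2+\delta)\phi_n(\lambda)$, where the $\delta$-contribution only arises when $n=0$. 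Terms such as $\phi(-\lambda-\mu,\lambda)$ contribute $(-1)^n\phi_n(\lambda)\mu^n$ times a factor that is linear in $\mu$. Comparing the coefficient of $\mu^{n+1}$, or of $\partial\mu^{n}$, on both sides should force $n=0$ in every case.

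\textbf{Step 1: $g_1=q_1=0$.} In \eqref{yyy26} the left side has $\mu$-degree at most $1$, while the right side has top term $2g_{1,n}\mu^{n+1}$ (after expanding $(\partial+\mu)^n$). So $n=0$ and $g_1=g(\lambda)$. The coefficient of $\partial$ then reads $2g=g$, so $g_1=0$. The same argument applied to \eqref{yyy33} gives $q_1=0$, and then \eqref{yyy28}, \eqref{yyy30}, \eqref{yyy35} and \eqref{yyy37} hold trivially.

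\textbf{Step 2: $f_1,f_2,h_1,h_2$ depend only on $\lambda$.} In \eqref{yyy24} the left side is linear in $\mu$ times $f_1$, so its $\mu$-degree is $1$. On the right, the leading contributions in $\mu^{n+1}$ are $(-1)^n2f_{1,n}$ from the first term and $2f_{1,n}$ from the second. If $n$ is even these sum to $4f_{1,n}\neq0$. If $n$ is odd they cancel, and I will instead compare the $\partial\mu^{n}$ coefficients, which leave a nonzero multiple of $f_{1,n}$ for $n\ge1$. This is the step I expect to be most delicate: the cancellation between the $\phi(-\lambda-\mu,\lambda)$ term and the shifted term has to be handled in the next-order coefficient, and the same care is needed in \eqref{yyy25} and \eqref{yyy32}, whose coefficients involve $a$ and $b$. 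The conclusion should be $f_1=p(\lambda)$, and then \eqref{yyy24} holds identically. The same argument on \eqref{yyy31} gives $h_1=r(\lambda)$. Similarly \eqref{yyy25} and \eqref{yyy32} give $f_2=s(\lambda)$ and $h_2=l(\lambda)$, after which they reduce to $2(\partial+\lambda+2\mu)s=a(\partial+\lambda+2\mu)s$ and the same identity with $l$ in place of $s$. Hence $s=l=0$ unless $a=2$.

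\textbf{Step 3: $g_2$, $q_2$ and the dichotomy.} The same degree argument on \eqref{yyy27} makes $g_2$ independent of $\partial$. The equation then becomes $(\partial+2\lambda+a\mu+b)g_2=(\partial+a\lambda+a\mu+b)p$. Comparing coefficients of $\partial$ gives $g_2=p$, and comparing coefficients of $\lambda$ gives $(2-a)p=0$. Equation \eqref{yyy29} with $g_2=p$ reduces to $2(a-1)\lambda p=a\lambda p$, which is again $(a-2)p=0$. In exactly the same way, \eqref{yyy34} and \eqref{yyy36} give $q_2=r$ and $(a-2)r=0$.

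So for $a\neq2$ all eight polynomials vanish, which is (A1). For $a=2$ the forms listed in (A2) are all attained except that $r$ might still depend on $\lambda$. To finish, substitute into the associativity relation \eqref{yyy20}: with $q_1=0$ it becomes $r(\mu)r(\lambda)=r(\lambda)r(\lambda+\mu)$. If $r\neq0$, cancelling $r(\lambda)$ gives $r(\mu)=r(\lambda+\mu)$ for all $\lambda,\mu$, so $r=c_0$ is constant. If $r=0$ this holds with $c_0=0$. This gives $h_1=q_2=c_0$ and completes (A2).
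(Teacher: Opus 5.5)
\textbf{Step 2 has a genuine gap, at exactly the point you flagged as delicate.} Your Steps~1 and~3 and the closing use of \eqref{yyy20} are correct. The problem is that the recipe ``compare the coefficient of $\mu^{n+1}$, or of $\partial\mu^{n}$'' does not force $n=0$ in \eqref{yyy25} and \eqref{yyy32} for every $a$.

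Write $f_2=\sum_{k=0}^{n}\phi_k(\lambda)\partial^k$ with $\phi_n\neq0$ and $n\ge2$, so the left side has $\mu$-degree at most $1$. On the right side of \eqref{yyy25}:
\begin{itemize}
\item the $\mu^{n+1}$-coefficient is $a\bigl(1+(-1)^n\bigr)\phi_n$;
\item the $\partial\mu^{n}$-coefficient is $\bigl((a-1)(-1)^n+1+an\bigr)\phi_n$, which equals $a(n+1)\phi_n$ for even $n$ and $\bigl(2+a(n-1)\bigr)\phi_n$ for odd $n$.
\end{itemize}
Both coefficients vanish when $a=0$ and $n$ is even. They also both vanish when $n$ is odd and $a=-2/(n-1)$, for instance $a=-1$, $n=3$.

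All these values of $a$ fall under (A1). For them your argument does not show that $f_2$ (or $h_2$) is independent of $\partial$, and without that you never reach $(2-a)(\partial+\lambda+2\mu)s=0$. For example, when $a=0$ and $n=2$ the right side is $(\partial+b)\bigl(f_2(\partial+\mu,\lambda)-f_2(-\lambda-\mu,\lambda)\bigr)$, whose entire $\mu^2$-coefficient vanishes. The contradiction only appears at order $\mu^1$, where the left side also contributes.

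For \eqref{yyy24}, \eqref{yyy31} and \eqref{yyy27} your comparisons do go through. The relevant coefficients there are $2nf_{1,n}$ (or $2f_{1,1}$ once the left side is included when $n=1$), and $ag_{2,n}$ or $g_{2,n}$.

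\textbf{The fix is to set $\mu=0$, as the paper does throughout.} In \eqref{yyy25} this gives
$(\partial+2\lambda-b)f_2(\partial,\lambda)=((a-1)\partial+a\lambda-b)f_2(-\lambda,\lambda)$.
The right side has $\partial$-degree at most $1$, so $f_2=s(\lambda)$ for every $a$. The same works for $h_2$ via \eqref{yyy32}. Likewise, $\mu=0$ in \eqref{yyy24} and \eqref{yyy31} gives $(\partial+2\lambda)\bigl(f_1(\partial,\lambda)-f_1(-\lambda,\lambda)\bigr)=0$ at once, so you need no parity case split.

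With this repair your proof is complete. It then differs from the paper only in how $h_1=q_2$ is shown to be constant. You do it at the end from \eqref{yyy20}. The paper first gets $q_2=c_0$ from \eqref{yyy23}, which reduces to the Virasoro equation \eqref{ly3}, and then reads off $h_1$ from \eqref{yyy34} at $\mu=0$. Both routes are valid.
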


\begin{proof}
By comparing the terms of highest degree of $\lambda$ in Eqs.~\eqref{yyy26} and \eqref{yyy33}, respectively, we obtain $g_1(\partial, \lambda)=q_1(\partial, \lambda)=0$.
Moreover, Eq.~\eqref{yyy23} reduces to
\begin{align}
q_2(\partial+\lambda, \mu)q_2(\partial, \lambda)=q_2(-\lambda-\mu, \lambda)q_2(\partial, \lambda+\mu),
\end{align}
which implies the same equation as Eq.~\eqref{ly3}, so we have $q_2(\partial, \lambda)=c_0$ for some $c_0\in\mathbb{C}$.
Setting $\mu=0$ in Eq.~\eqref{yyy31}, we obtain $(\partial+2\lambda)(h_1(\partial, \lambda)-h_1(-\lambda, \lambda))=0$,
whence $h_1(\partial, \lambda)=h_1(-\lambda, \lambda)$. Similarly, setting $\mu=0$ in Eq.~\eqref{yyy34}, we get
\begin{align}\label{zmy1}
c_{0}(\partial+2\lambda+b)=(\partial+a\lambda+b)h_1(-\lambda, \lambda).
\end{align}
If $a=2$, then $(\partial+2\lambda+b)(h_1(-\lambda, \lambda)-c_0)=0$, hence $h_1(\partial, \lambda)=h_1(-\lambda, \lambda)=c_0$.
If $a\neq2$, by comparing the similar terms in Eq.~\eqref{zmy1}, one can obtain $h_1(\partial, \lambda)=c_0=0$.

Setting $\mu=0$ in Eq.~\eqref{yyy24}, we get $(\partial+2\lambda)(f_1(\partial, \lambda)-f_1(-\lambda, \lambda))=0$,
hence $f_1(\partial, \lambda)=f_1(-\lambda, \lambda)=p(\lambda)$ for some $p(\lambda)\in\mathbb{C}[\lambda]$.
Substituting this and $\mu=0$ into Eq.~\eqref{yyy27}, we obtain
\begin{align}\label{zmy2}
(\partial+2\lambda+b)g_2(\partial, \lambda)=(\partial+a\lambda+b)p(\lambda).
\end{align}
If $a=2$, then $(\partial+2\lambda+b)(g_2(\partial, \lambda)-p(\lambda))=0$, hence $g_2(\partial, \lambda)=p(\lambda)$.
If $a\neq2$, by comparing the terms of highest degree of $\partial$ in Eq.~\eqref{zmy2}, we get $g_2(\partial, \lambda)=p(\lambda)$.
It follows further that $(2-a)\lambda p(\lambda)=0$, leading to $p(\lambda)=0$.

Setting $\mu=0$ in Eq.~\eqref{yyy25}, we obtain
\begin{align}\label{zmy3}
(\partial+2\lambda-b)f_2(\partial, \lambda)=((a-1)\partial+a\lambda-b)f_2(-\lambda, \lambda).
\end{align}
If $a=2$, then $(\partial+2\lambda-b)(f_2(\partial, \lambda)-f_2(-\lambda, \lambda))=0$, hence $f_2(\partial, \lambda)=f_2(-\lambda, \lambda)=s(\lambda)$ for some $s(\lambda)\in\mathbb{C}[\lambda]$.
If $a\neq2$, by comparing the terms of highest degree of $\partial$ in Eq.~\eqref{zmy3}, one can get $f_2(\partial, \lambda)=f_2(-\lambda, \lambda)=0$.
In a similar way, setting $\mu=0$ in Eq.~\eqref{yyy32}, we have
\begin{align}\label{zmy4}
(\partial+2\lambda-b)h_2(\partial, \lambda)=((a-1)\partial+a\lambda-b)h_2(-\lambda, \lambda).
\end{align}
which implies the same equation as Eq.~\eqref{zmy3}. Therefore, when $a=2$, $h_2(\partial, \lambda)=h_2(-\lambda, \lambda)=l(\lambda)$, where $l(\lambda)\in\mathbb{C}[\lambda]$;
and when $a\neq2$, $h_2(\partial, \lambda)=h_2(-\lambda, \lambda)=0$.
This ends the proof.
\end{proof}

\begin{lemma}\label{lems2}
In C\textsc{ase} (A2) in Lemma \ref{lems1}, we have the following results:





(B1) $f_2(\partial, \lambda)=s(\lambda)$, $h_1(\partial, \lambda)=q_2(\partial, \lambda)=f_1(\partial, \lambda)=g_2(\partial, \lambda)=h_2(\partial, \lambda)=0$, where $s(\lambda)\in\mathbb{C}[\lambda]$;

(B2) $f_2(\partial, \lambda)=c_2\lambda$, $f_1(\partial, \lambda)=g_2(\partial, \lambda)=c_1$, $h_1(\partial, \lambda)=q_2(\partial, \lambda)=h_2(\partial, \lambda)=0$, where $c_2\in\mathbb{C}, c_1\in \mathbb{C}\backslash\{0\}$;

(B3) $f_2(\partial, \lambda)=c_3$, $f_1(\partial, \lambda)=g_2(\partial, \lambda)=h_2(\partial, \lambda)=c_1$, $h_1(\partial, \lambda)=q_2(\partial, \lambda)=0$, where $c_3\in\mathbb{C}, c_1\in \mathbb{C}\backslash\{0\}$;

(B4) $f_2(\partial, \lambda)=\frac{km}{c_0}\lambda-\frac{k^2}{c_0}\lambda^2$, $f_1(\partial, \lambda)=g_2(\partial, \lambda)=m-k\lambda$, $h_2(\partial, \lambda)=k\lambda$, $h_1(\partial, \lambda)=q_2(\partial, \lambda)=c_0$, where $m,k\in\mathbb{C}, c_0\in \mathbb{C}\backslash\{0\}$.

\end{lemma}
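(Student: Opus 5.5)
The plan is to substitute the normal form of Case (A2) of Lemma~\ref{lems1} into the remaining equations. That normal form is
\[
g_1=q_1=0,\quad h_1=q_2=c_0,\quad f_1=g_2=p(\lambda),\quad f_2=s(\lambda),\quad h_2=l(\lambda).
\]
The remaining equations are the associativity equations \eqref{yyy8}--\eqref{yyy23} and the Leibniz equations \eqref{yyy24}--\eqref{yyy37} with $a=2$. Since every unknown is now independent of $\partial$, each equation collapses to a functional equation for the one-variable polynomials $p,s,l$ and the constant $c_0$.

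The first step is to dispose of the Leibniz equations. With $a=2$ and $\partial$-free $p,s,l$, equations \eqref{yyy24}, \eqref{yyy25}, \eqref{yyy27}, \eqref{yyy29}, \eqref{yyy31}, \eqref{yyy32}, \eqref{yyy34} and \eqref{yyy36} should become identities by direct inspection of coefficients. For example, \eqref{yyy24} reads $2(\partial+\lambda+2\mu)p=(\partial+2\lambda+2\mu)p+(\partial+2\mu)p$. The other equations hold trivially because $g_1=q_1=0$. I would verify each of these quickly.

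Next I would list the associativity equations that survive. Several are automatic, such as \eqref{yyy20} and \eqref{yyy23}, which reduce to $c_0^2=c_0^2$. The substantive ones are:
\begin{align*}
&p(\mu)p(\lambda)=p(\lambda)p(\lambda+\mu)+c_0 s(\lambda), &&\text{from \eqref{yyy8} and \eqref{yyy11}},\\
&c_0\bigl(p(\mu)-p(\lambda+\mu)-l(\lambda)\bigr)=0, &&\text{from \eqref{yyy16}},\\
&p(\mu)s(\lambda)+s(\mu)p(\lambda)=p(\lambda)s(\lambda+\mu)+s(\lambda)l(\lambda+\mu), &&\text{from \eqref{yyy9}},\\
&c_0 s(\lambda)+p(\lambda)l(\mu)=p(\lambda)l(\lambda+\mu), &&\text{from \eqref{yyy13}},\\
&p(\mu)l(\lambda)+c_0 s(\mu)=c_0 s(\lambda+\mu)+l(\lambda)l(\lambda+\mu), &&\text{from \eqref{yyy17}}.
\end{align*}
The rest should either repeat these or be trivial.

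I then split on whether $c_0$ vanishes.

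If $c_0\neq0$, the \eqref{yyy16} relation gives $l(\lambda)=p(\mu)-p(\lambda+\mu)$ for all $\lambda,\mu$. This forces $p$ to be affine, say $p=m-k\lambda$, and then $l=k\lambda$. The first relation then determines $s(\lambda)=p(\lambda)l(\lambda)/c_0=(km\lambda-k^2\lambda^2)/c_0$. Finally I would check that \eqref{yyy9}, \eqref{yyy13} and \eqref{yyy17} are consistent with these values, which yields (B4).

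If $c_0=0$, the first relation gives $p(\lambda)\bigl(p(\mu)-p(\lambda+\mu)\bigr)=0$, so $p$ is a constant $c_1$.
\begin{itemize}
\item If $c_1=0$, then \eqref{yyy17} gives $l(\lambda)l(\lambda+\mu)=0$, so $l=0$, while $s$ is arbitrary. This is (B1).
\item If $c_1\neq0$, then \eqref{yyy13} gives $l(\mu)=l(\lambda+\mu)$, so $l$ is a constant $c$. Equation \eqref{yyy17} then gives $c_1c=c^2$, so $c\in\{0,c_1\}$.
\item For $l=0$, \eqref{yyy9} makes $s$ additive, so $s=c_2\lambda$. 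This is (B2).
\item For $l=c_1$, \eqref{yyy9} gives $s(\mu)=s(\lambda+\mu)$, so $s$ is a constant $c_3$. This is (B3).
\end{itemize}

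The main obstacle I expect is bookkeeping rather than ideas. I need to confirm that every one of the sixteen associativity equations and all the Leibniz equations either reduces to one of the five relations above or holds identically. In particular I must track carefully the substitutions $f_i(-\lambda-\mu,\lambda)$ versus $f_i(\partial+\lambda,\mu)$, since after the reduction these become $p(\lambda)$ versus $p(\mu)$. A single swap there would change the answer, so I would double-check each reduced equation before drawing conclusions.
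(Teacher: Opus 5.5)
Your proposal is correct and follows essentially the same route as the paper: reduce \eqref{yyy8}--\eqref{yyy37} to one-variable functional equations in $p,s,l,c_0$, observe that the Leibniz equations become identities for $a=2$, and split on $c_0=0$ versus $c_0\neq0$. For $c_0=0$ you get $p$ constant with the subcases $c_1=0$ or $l\in\{0,c_1\}$; for $c_0\neq0$ you get $p=m-k\lambda$, $l=k\lambda$ and $c_0 s=pl$ (you obtain this via \eqref{yyy8} and \eqref{yyy16}, where the paper uses \eqref{yyy43} and \eqref{yyy46}, which is immaterial).

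One bookkeeping slip: \eqref{yyy21} is neither trivial nor a repeat of your five relations. It reduces to $c_0\bigl(l(\lambda)+l(\mu)-l(\lambda+\mu)\bigr)=0$, which is the paper's \eqref{yyy43}. This equation is vacuous when $c_0=0$ and holds automatically once $l=k\lambda$ when $c_0\neq0$, so your classification is unaffected.
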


\begin{proof}
In C\textsc{ase} (A2) in Lemma \ref{lems1}, Eqs.~\eqref{yyy8}--\eqref{yyy37} are equivalent to the following equations:
\begin{align}
&c_{0}l(\lambda)+c_{0}l(\mu)=c_{0}l(\lambda+\mu),\label{yyy43}\\
&c_{0}s(\mu)+p(\mu)l(\lambda)=c_{0}s(\lambda+\mu)+l(\lambda)l(\lambda+\mu),\label{yyy44}\\
&c_{0}l(\lambda)+c_{0}p(\lambda+\mu)=c_{0}p(\mu),\label{yyy45}\\
&p(\lambda)l(\mu)+c_{0}s(\lambda)=p(\lambda)l(\lambda+\mu),\label{yyy46}\\
&p(\lambda)s(\mu)+p(\mu)s(\lambda)=p(\lambda)s(\lambda+\mu)+s(\lambda)l(\lambda+\mu),\label{yyy47}\\
&p(\lambda)p(\lambda+\mu)+c_{0}s(\lambda)=p(\lambda)p(\mu)\label{yyy48}.
\end{align}
To solve the Eqs.~\eqref{yyy43}--\eqref{yyy48}, we distinguish the following two cases:

C\textsc{ase} (I): $c_{0}=0$. Then it follows that Eqs.~\eqref{yyy43} and \eqref{yyy45} hold. By Eq.~\eqref{yyy48}, we get
$p(\lambda)p(\lambda+\mu)=p(\lambda)p(\mu)$, which implies $p(\lambda)=c_1$ for some $c_1\in \mathbb{C}$.

S\textsc{ubcase} (IA): $c_{1}=0$. By Eq.~\eqref{yyy44}, we get $l(\lambda)l(\lambda+\mu)=0$, hence $l(\lambda)=0$.
Consequently, one can readily verify that Eqs.~\eqref{yyy46} and \eqref{yyy47} hold for all $s(\lambda)\in \mathbb{C}[\lambda]$.

S\textsc{ubcase} (IB): $c_{1}\neq0$. Combining Eqs.~\eqref{yyy44} and \eqref{yyy46}, we get $l(\lambda)(l(\lambda)-c_1)=0$, which implies
$l(\lambda)=0$ or $l(\lambda)=c_1$. If $l(\lambda)=0$, by Eq.~\eqref{yyy47}, we obtain $s(\mu)+s(\lambda)=s(\lambda+\mu)$.
It follows that the degree of $s(\lambda)$ is at most one. Hence we may write $s(\lambda)=c_4+c_2\lambda$ with $c_4,c_2\in \mathbb{C}$,
taking it into the equation above, we get $c_4=0$. Thus, $s(\lambda)=c_2\lambda$ for some $c_2\in \mathbb{C}$.
If $l(\lambda)=c_1$, by Eq.~\eqref{yyy47}, we get $s(\mu)=s(\lambda+\mu)$, leading to $s(\lambda)=c_3$ for some $c_3\in \mathbb{C}$.

C\textsc{ase} (II): $c_{0}\neq0$. By Eq.~\eqref{yyy43}, we obtain $l(\lambda)+l(\mu)=l(\lambda+\mu)$. It follows that $l(\lambda)=k\lambda$ for some $k\in \mathbb{C}$.
Substituting it into Eq.~\eqref{yyy45}, we get $p(\lambda+\mu)-p(\mu)=-k\lambda$.
It is obvious that the degree of $p(\lambda)$ is at most one. Setting $p(\lambda)=m+k_1\lambda$, where $m,k_1\in \mathbb{C}$,
it follows further that $k_1=-k$. Hence $p(\lambda)=m-k\lambda$ for some $m,k\in \mathbb{C}$. 
Combining Eqs.~\eqref{yyy43} and \eqref{yyy46}, we get $c_0s(\lambda)=p(\lambda)l(\lambda)$. Therefore, $s(\lambda)=\frac{km}{c_0}\lambda-\frac{k^2}{c_0}\lambda^2$, where $m,k\in \mathbb{C},c_0\in\mathbb{C}\backslash\{0\}$.
Furthermore, it is easy to check that Eqs.~\eqref{yyy44}, \eqref{yyy47}, and \eqref{yyy48} are satisfied.

The proof is completed.
\end{proof}

\begin{theorem}\label{theorem1}
The compatible noncommutative TPCA structures over the Lie conformal algebra $W(a,b)$ are of the forms:

(1) If $a\neq 2$, then
\begin{align*}
L\circ_\lambda L=L\circ_\lambda M=M\circ_\lambda L=M\circ_\lambda M=0;
\end{align*}

(2) If $a=2$, then

\indent\indent(2.1) $L\circ_\lambda L= s(\lambda)M,\quad L\circ_\lambda M=M\circ_\lambda L=M\circ_\lambda M=0$;  

\indent\indent(2.2) $L\circ_\lambda L= c_1L+c_2\lambda M,\quad L\circ_\lambda M=c_1M, \quad M\circ_\lambda L=M\circ_\lambda M=0$; 

\indent\indent(2.3) $L\circ_\lambda L= c_1L+c_3M,\quad L\circ_\lambda M=M\circ_\lambda L=c_1M, \quad M\circ_\lambda M=0$;

\indent\indent(2.4) $L\circ_\lambda L= (m-k\lambda)L+(\frac{km}{c_0}\lambda-\frac{k^2}{c_0}\lambda^2)M,\quad L\circ_\lambda M=(m-k\lambda)M,$\\
\hphantom{\indent\indent(2.4)} $M\circ_\lambda L=c_0L+k\lambda M, \quad M\circ_\lambda M=c_0M$;

where $s(\lambda)\in \mathbb{C} [\lambda]$, $c_0,c_1\in \mathbb{C}\backslash\{0\}$, $c_2,c_3,m,k\in\mathbb{C}$.
\end{theorem}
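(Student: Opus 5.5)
The theorem follows by assembling Lemma \ref{lems1} and Lemma \ref{lems2} and translating the polynomial data back into $\lambda$-products via Eqs.~\eqref{ly4} and \eqref{ly5}. The reduction to Eqs.~\eqref{yyy8}--\eqref{yyy37} is an equivalence: a $\lambda$-multiplication of the form \eqref{ly4}--\eqref{ly5} is a compatible noncommutative TPCA structure on $W(a,b)$ if and only if these polynomial identities hold. It therefore suffices to list all solutions of that system.

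\emph{Case $a\neq 2$.} Case (A1) of Lemma \ref{lems1} forces all eight polynomials to vanish. Hence every product $L\circ_\lambda L$, $L\circ_\lambda M$, $M\circ_\lambda L$, $M\circ_\lambda M$ is zero, which is item (1). The zero product trivially satisfies associativity and \eqref{rule3}, so it is indeed a (degenerate) structure.

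\emph{Case $a=2$.} Lemma \ref{lems1} (A2) reduces the unknowns to $c_0$, $p$, $s$, $l$, with $g_1=q_1=0$, $h_1=q_2=c_0$, $f_1=g_2=p$, $f_2=s$, $h_2=l$. Lemma \ref{lems2} then solves the residual system \eqref{yyy43}--\eqref{yyy48} and gives the four families (B1)--(B4). Substituting each into \eqref{ly4}--\eqref{ly5} gives the products:
\begin{itemize}
\item (B1) gives $L\circ_\lambda L=s(\lambda)M$ with all other products zero, which is (2.1).
\item (B2) gives $L\circ_\lambda L=c_1L+c_2\lambda M$, $L\circ_\lambda M=c_1M$, and $M\circ_\lambda L=M\circ_\lambda M=0$, which is (2.2).
\item (B3) adds $h_2=c_1$, so $M\circ_\lambda L=c_1M$, which is (2.3).
\item (B4) gives $M\circ_\lambda L=c_0L+k\lambda M$, $M\circ_\lambda M=c_0M$, and the stated $L\circ_\lambda L$ and $L\circ_\lambda M$, which is (2.4).
\end{itemize}
The parameter restrictions are inherited from the case split in Lemma \ref{lems2}: $c_1\neq0$ in Subcase (IB) and $c_0\neq0$ in Case (II). The case $c_1=0$ of (B2) and (B3) is already contained in (2.1).

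\emph{Main obstacle: sufficiency.} The real work is checking that each listed family actually satisfies the full system \eqref{yyy8}--\eqref{yyy37}, not just the reduced system \eqref{yyy43}--\eqref{yyy48}. The proof of Lemma \ref{lems2} asserts that, under (A2), the full system is equivalent to the reduced one. I would verify this by substituting the (A2) forms directly into the equations.
\begin{itemize}
\item In the Leibniz-type equations \eqref{yyy24}--\eqref{yyy37} with $a=2$, every polynomial is independent of $\partial$ and $g_1=q_1=0$. For example, \eqref{yyy24} becomes $2(\partial+\lambda+2\mu)p=(2\partial+2\lambda+4\mu)p$, and \eqref{yyy25}, \eqref{yyy32}, \eqref{yyy27}, \eqref{yyy29}, \eqref{yyy34} and \eqref{yyy36} collapse to identities in the same way. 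So these hold automatically.
\item The associativity equations \eqref{yyy8}--\eqref{yyy23} then reduce to \eqref{yyy43}--\eqref{yyy48}. For instance, \eqref{yyy16} gives $p(\mu)c_0+s(\mu)c_0=c_0p(\lambda+\mu)+l(\lambda)c_0$.
\end{itemize}
To be safe, I would also plug each of (2.1)--(2.4) into associativity explicitly. For (2.4) this means checking, e.g., $L\circ_\lambda(L\circ_\mu L)=(L\circ_\lambda L)\circ_{\lambda+\mu}L$; the $M$-coefficient yields the identity $c_0s(\lambda)=p(\lambda)l(\lambda)$ together with the additivity of $l$. With that check the classification is complete.
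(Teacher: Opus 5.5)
Your route is the paper's: the theorem is read off from Lemmas \ref{lems1} and \ref{lems2} through \eqref{ly4}--\eqref{ly5}, and the correspondence (B1)--(B4) $\to$ (2.1)--(2.4) is right. Your extra check is also correct in its main claim: for $a=2$ and the (A2) forms, the Leibniz equations \eqref{yyy24}--\eqref{yyy37} become identities. This justifies the paper's assertion that the full system reduces to \eqref{yyy43}--\eqref{yyy48}.

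There is one computational slip in your sufficiency check that you should fix.

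\begin{itemize}
\item In \eqref{yyy16}, the second term on the left is $f_2(\partial+\lambda,\mu)q_1(\partial,\lambda)=0$, because $q_1=0$. So \eqref{yyy16} reduces to $c_0p(\mu)=c_0p(\lambda+\mu)+c_0l(\lambda)$, which is \eqref{yyy45}.
\item The term $c_0s(\mu)$ you wrote comes from \eqref{yyy17}, which gives \eqref{yyy44}.
\item This matters. Your version of \eqref{yyy16}, combined with \eqref{yyy45}, would force $c_0s=0$. That means $k=0$ in (B4), which would wrongly cut down family (2.4).
\end{itemize}

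Your last remark is also imprecise. The $M$-coefficient of $L\circ_\lambda(L\circ_\mu L)=(L\circ_\lambda L)\circ_{\lambda+\mu}L$ is \eqref{yyy47}. It does not yield $c_0s=pl$; that relation comes from \eqref{yyy46} together with \eqref{yyy43}. Verifying \eqref{yyy47} for (B4) needs three facts: $c_0s=pl$, $l(\lambda)=k\lambda$, and $p(\lambda+\mu)-p(\mu)=-k\lambda$.
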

\begin{proof}
This theorem follows directly from Lemma \ref{lems1} and Lemma \ref{lems2}.
\end{proof}

In order to classify all non-isomorphic noncommutative TPCA structures with the underlying Lie conformal algebra isomorphic to
$W(2,b)$, we discuss each of the cases {(2.1)--(2.4)} in Theorem \ref{theorem1} and present the nonzero $\lambda$-products.

In C\textsc{ase} (2.1), providing that $0 \neq s(\lambda)\in \mathbb{C} [\lambda]$, the compatible noncommutative TPCA structures on $W(2,b)$ are nontrivial.

In C\textsc{ase} (2.2), since $c_1\neq0$, then after the changing of the $\mathbb{C}[\partial]$-basis $[L':=L-\frac{c_2}{c_1}(\partial-b)M,\, M':= M]$, we obtain
\begin{align*}
L'\circ_\lambda L'=c_1L',\quad L'\circ_\lambda M'=c_1M',\quad M'\circ_\lambda L'=M'\circ_\lambda M'=0.
\end{align*}

In C\textsc{ase} (2.3), if $b\neq0$, then after the changing of the $\mathbb{C}[\partial]$-basis $[L':=L+\frac{c_3}{bc_1}(\partial-b)M,\, M':= M]$, we obtain
\begin{align*}
L'\circ_\lambda L'=c_1L',\quad L'\circ_\lambda M'=M'\circ_\lambda L'=c_1M',\quad M'\circ_\lambda M'=0.
\end{align*}
As for $b=0$, we consider the following two subcases.

S\textsc{ubcase} (I): In the case $c_{3}=0$, we directly obtain
\begin{align*}
L\circ_\lambda L=c_1L,\quad L\circ_\lambda M=M\circ_\lambda L=c_1M,\quad M\circ_\lambda M=0.
\end{align*}

S\textsc{ubcase} (II): In the case $c_{3}\neq0$, after the changing of the $\mathbb{C}[\partial]$-basis $[L':=L,\, M':= c_3M]$, we have
\begin{align*}
L'\circ_\lambda L'=c_1L'+M',\quad L'\circ_\lambda M'=M'\circ_\lambda L'=c_1M',\quad M'\circ_\lambda M'=0.
\end{align*}

In C\textsc{ase} (2.4), since $c_0\neq0$, then after the changing of the $\mathbb{C}[\partial]$-basis $[L':=L,\, M':=\frac{1}{c_0} M]$, we have
\begin{align*}
&L'\circ_\lambda L'= (m-k\lambda)L'+(km\lambda-{k^2}\lambda^2)M',\\
&L'\circ_\lambda M'=(m-k\lambda)M',\quad M'\circ_\lambda L'=L'+k\lambda M', \quad M'\circ_\lambda M'=M'.
\end{align*}
Next we make the transformation $[L:=L'-k(\partial-b)M',\, M:= M']$ and obtain
\begin{align*}
L\circ_\lambda L=c L,\quad L\circ_\lambda M=c M,\quad M\circ_\lambda L=L,\quad M\circ_\lambda M=M,\quad where\,\, c=m+kb.
\end{align*}

We summarize the discussions above in the following theorem:
\begin{theorem}
Let $(W(a,b),\circ_\lambda,[\cdot_\lambda\cdot])$ be a nontrivial noncommutative TPCA structures over the Lie conformal algebra $W(a,b)$. Then $(W(a,b),\circ_\lambda,[\cdot_\lambda\cdot])$
is isomorphic to one of the following:
\begin{enumerate}
\item[{(1)}] $L\circ_\lambda L= s(\lambda)M,\quad L\circ_\lambda M=M\circ_\lambda L=M\circ_\lambda M=0$;
\item[{(2)}] $L\circ_\lambda L=c_1L,\quad L\circ_\lambda M=c_1M,\quad M\circ_\lambda L=M\circ_\lambda M=0$;
\item[{(3)}] $L\circ_\lambda L=c_1L,\quad L\circ_\lambda M=M\circ_\lambda L=c_1M,\quad M\circ_\lambda M=0$;
\item[{(4)}] $L\circ_\lambda L=c_1L+M,\quad L\circ_\lambda M=M\circ_\lambda L=c_1M,\quad M\circ_\lambda M=0$;
\item[{(5)}] $L\circ_\lambda L=c L,\quad L\circ_\lambda M=c M,\quad M\circ_\lambda L=L,\quad M\circ_\lambda M=M$;
\end{enumerate}
where $0 \neq s(\lambda)\in \mathbb{C} [\lambda]$, $c_1\in \mathbb{C}\backslash\{0\}$, $c\in\mathbb{C}$.
\end{theorem}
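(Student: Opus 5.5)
The plan is to derive the statement from Theorem~\ref{theorem1} and then normalize each family by a change of $\mathbb{C}[\partial]$-basis that preserves the $\lambda$-bracket of $W(a,b)$. If $a\neq 2$, Theorem~\ref{theorem1}(1) gives only the zero $\lambda$-product, so it is excluded by the nontriviality assumption. Hence $a=2$ and we are in one of the cases (2.1)--(2.4). For each case we exhibit an isomorphism of TPCAs, meaning a $\mathbb{C}[\partial]$-module automorphism that preserves both the bracket and the product, onto one of the listed forms (1)--(5).

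\textbf{Admissible basis changes.} These are of the form $L' = L + \phi(\partial)M$, $M' = \beta M$ with $\beta\neq0$. Since $[M_\lambda M]=0$, scaling $M$ preserves the bracket. For the shift, we need $[L'_\lambda L']=(\partial+2\lambda)L'$. Expanding with $a=2$ gives the requirement
\[
\phi(\partial)(\partial+2\lambda)=-\phi(-\lambda)\bigl((\partial-b)+2\lambda\bigr)\ \text{type terms},
\]
which should force $\phi(\partial)=\kappa(\partial-b)$, matching the choices $\frac{c_2}{c_1}(\partial-b)$ and $k(\partial-b)$ used above. We then check $[L'_\lambda M']=(\partial+2\lambda+b)M'$, which holds because $[M_\lambda M]=0$.

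\textbf{Case-by-case normalization.} We recompute the products in the new basis using sesquilinearity, $(\partial x)\circ_\lambda y=-\lambda x\circ_\lambda y$ and $x\circ_\lambda \partial y=(\partial+\lambda)x\circ_\lambda y$.
\begin{itemize}
\item Case (2.1) is already form (1), with $s\neq0$ by nontriviality.
\item Case (2.2): the shift $\phi=-\frac{c_2}{c_1}(\partial-b)$ kills the $\lambda M$ term in $L'\circ_\lambda L'$, since $\phi(-\lambda)c_1 M$ and $c_1\phi(\partial+\lambda)M$ combine to $-c_2\lambda M$. This yields form (2).
\item Case (2.3) with $b\neq0$: the shift by $\frac{c_3}{bc_1}(\partial-b)$ removes $c_3M$, giving form (3). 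If $b=0$ this shift is unavailable. Then $c_3=0$ gives form (3), and $c_3\neq0$ is rescaled by $M'=c_3M$ to give form (4).
\item Case (2.4): rescale $M$ by $1/c_0$, then shift by $-k(\partial-b)M'$. We verify that all four products collapse to form (5) with $c=m+kb$.
\end{itemize}

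\textbf{Main obstacle.} The delicate step is the explicit recomputation in case (2.4). There the shift interacts with the noncommutative products $M\circ_\lambda L$ and $M\circ_\lambda M$, so we must track $\phi(-\lambda)$ versus $\phi(\partial+\lambda)$ carefully to confirm that the $\lambda$- and $\lambda^2$-terms cancel exactly. A secondary point is that the statement only asserts that every nontrivial structure is isomorphic to one in the list. So we do not need to prove the listed forms are pairwise non-isomorphic; we only need the reductions above.
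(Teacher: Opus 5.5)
Your proposal is correct and follows the paper's argument: rule out $a\neq 2$ by nontriviality, then normalize cases (2.1)--(2.4) of Theorem~\ref{theorem1} with the same basis changes $L'=L+\kappa(\partial-b)M$, $M'=\beta M$, obtaining $c=m+kb$ in case (2.4). These changes do preserve the bracket, since $\phi(\partial+\lambda)(\partial+2\lambda+b)+\phi(-\lambda)(\partial+2\lambda-b)=(\partial+2\lambda)\phi(\partial)$ holds for $\phi=\kappa(\partial-b)$; one small slip is that in case (2.2) there is no $\phi(-\lambda)$ term because $M\circ_\lambda L=0$, and the actual cancellation comes from comparing $L'\circ_\lambda L'$ with $c_1L'$, namely $c_1\bigl(\phi(\partial+\lambda)-\phi(\partial)\bigr)=-c_2\lambda$.
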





\bibliography{tpca}
\bibliographystyle{plainurl}

\end{document}